\theoremstyle{plain}
\newtheorem{theorem}{Theorem}
\newtheorem{lemma}{Lemma}
\newtheorem{proposition}{Proposition}[section]
\theoremstyle{definition}
\theoremstyle{remark}
\numberwithin{equation}{section}
\numberwithin{lemma}{section}
\numberwithin{theorem}{section}
\newtheorem{definition}{Definition}
\begin{document} 

\title[A family of elliptic curves]{
On a family of elliptic curves  of rank at least $2$}
\author{Kalyan Chakraborty and Richa Sharma}
%\address{Sanjay Bhatter @Malaviya National Institute of Technology Jaipur, Jaipur, 302017, India.}
%\email{ Sbhatter.maths@mnit.ac.in}
\address{Kalyan Chakraborty @Kerala School of Mathematics, Kozhikode-673571, Kerala, India.}
\email{kalychak@ksom.res.in}
\address{Richa Sharma @Kerala School of Mathematics, Kozhikode-673571, Kerala, India.}
\email{richa@ksom.res.in}
\keywords{ Elliptic curves, torsion subgroup, rank.}
\subjclass[2010] {Primary: 11G05, Secondary: 14G05}
\maketitle

\begin{abstract} 
Let $C_{m} : y^{2} = x^{3} - m^{2}x +p^{2}q^{2}$ be a family of  elliptic curves over $\mathbb{Q}$, where  $m$ is a positive integer and $p, q$ are distinct odd primes. We study the torsion part and the rank of $C_m(\mathbb{Q})$. More specifically, we  prove that  the torsion subgroup of $C_{m}(\mathbb{Q})$ is trivial and the $\mathbb{Q}$-rank of this family is at least $2$, whenever $m \not \equiv 0 \pmod 4$ and $m \equiv 2 \pmod {64}$ with neither $p$ nor $q$ divides $m$.
%for neither $p$ nor $q$ dividing $m$.
%We also prove that
 %for $m \not \equiv 0 \pmod 4$ and $m \equiv 2 \pmod{64}$, the rank of $C_{m}(\mathbb{Q})$ is at least $2$. At the end we provide a table corroborating our result on the rank part.
\end{abstract}

\section{Introduction}
The arithmetic of elliptic curves is one of the most fascinating branches in mathematics which has exciting practical applications too.
In  2002, Brown and Myers \cite{BM02} showed 
 $E_{m}:y^2=x^3-x+m^2$ has trivial torsion when $m \ge 1$, rank$(E_{m}(\mathbb{Q})) \ge 2$ if $m \ge 2$ and rank$(E_{m}(\mathbb{Q})) \ge 3$ for infinitely many values of $m$.
 Antoniewicz  \cite{AA05} considered the family $C_{m}: y^2 = x^3 - m^{2}x +1$ 
 and derived a lower bound on the rank. He showed that rank $(C_{m}(\mathbb{Q})) \ge 2$ for $m \ge 2$ and  rank $(C_{4k}(\mathbb{Q})) \ge 3$ for the infinite subfamily with $k \ge 1$. 
 Later Petra \cite{TP93} in 2012  gave a parametrization on $E:Y^2=X^3-T^2 X +1$ of rank at least $4$ over the function fields and with the help of this he found a family of rank $\ge 5$ over the field of rational functions  and a family of rank $\ge 6$ over an elliptic curve. 
 Petra again in another work \cite{TP12} considered $E:Y^2=x^3-x+T^2$, a parametrization of rank $\ge 3$ over the function fields and using this he found families of rank $ \ge 3$, $ \ge 4$ over fields of rational functions. He also  obtained a particular elliptic curve with rank $r \ge 11$. 
 More recently, Juyal and Kumar \cite{AS18} considered the family $E_{m,p}: y^2 = x^3 -m^2x+p^2$ and showed that the lower bound for the rank of  $E_{m,p}(\mathbb{Q})$ is $2$. 
 
Extending the study further, we generalize the family $E_{m,p}: y^2 = x^3 -m^2x+p^2$ by including one more prime $q$  with some conditions on the integer $m$.
 \section{Preliminaries} 
In this section we shall recall some basic facts in the theory of elliptic curves and fix the notations along the way.

Throughout this article, we denote the family of elliptic curves 
$$
y^{2} = x^{3} - m^{2}x +p^{2}q^{2}
$$ 
by $C_{m}$.
\subsection{Elliptic curve}
\begin{definition}
Let $K$ be a number field with characteristic not equal to $2$ or $3$. An algebraic curve $E$ over $K$ is defined to be an elliptic curve and is given by 
%{\bf  be a field of characteristic $\neq 2,3$. An Elliptic curve $E$ over a field $K$, denoted by $E/K$, is an algebraic curve defined by an equation of the form}
$$
E:y^{2}=x^{3}+bx+c~{\text{with}}~b,c \in K,
$$
and $\Delta=-(4b^{3}+27c^{2}) \neq 0$.
\end{definition}
It is a smooth curve which is encoded in the discriminant $\Delta \neq 0$ and this also signifies that 
%The condition that the discriminant $\bigtriangleup$ be nonzero is equivalent to the curve
%being smooth. It is also equivalent to the cubic
$x^{3}+bx+c$ having $3$ distinct roots. This ensures that the curve is non-singular.

Let $E(K)$ denote the set of all $K$- rational points on $E$ with an additional point $\mathcal{O}$, `the point of infinity', i.e.
$$
E(K) =\left\lbrace (x,y) \in K \times K: y^{2}=x^{3}+bx+c\right\rbrace \cup \left\lbrace \mathcal{O}\right\rbrace.
$$
%\end{definition}
%{\bf The Group Law}\\
%\begin{definition} 
 %Let $E$ be an elliptic curve over $K$. Let $P_{1}, P_{2} \in E(K)$ (need not be distinct) with $P_{1} = (x_{1}, y_{1}),  P_{2}= (x_{2}, y_{2})$. The line passing through $P_{1}$ and $P_{2}$ intersect the cubic at a third point $P_{3}^{'}$ (could be the point at infinity). The reflection of $P_{3}^{'}$ with respect to the $x$-axis is defined to be the sum of $P_{1}$ and $P_{2}$. This is the geometrical way of describing the group  operation on the points of $E$.
%This binary operation we denote $\oplus$ in $E(K)$is :
%$$
%P_{1} \oplus P_{2}=P_{3}.
%$$
%\end{definition}
%If $P_{1}=P_{2}$, this process gives us twice of a point, we take the tangent line at $P_{1}$ on $E$ and the same procedure can be continued.
%\end{definition}
\begin{proposition} \cite{JJ92}
%Let $E$ be an elliptic curve over a number field $K$. Then the set 
$E(K)$ forms a finitely generated abelian group under $\oplus$. The point $\mathcal{O}$ is the identity under this operation. 
\end{proposition}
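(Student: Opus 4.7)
The plan is to establish the Mordell--Weil theorem by its two classical pillars---the \emph{weak Mordell--Weil theorem} and the \emph{theory of heights}---which together drive an infinite descent. This is the strategy due in essence to Mordell (for $K=\mathbb{Q}$) and extended to number fields by Weil.

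First I would reduce matters to showing that $E(K)/nE(K)$ is finite for some (equivalently, every) $n \geq 2$; take $n=2$. To prove this, I would invoke Kummer theory on the short exact sequence of $G_K$-modules
\[
0 \to E[n] \to E(\bar{K}) \xrightarrow{[n]} E(\bar{K}) \to 0,
\]
yielding an injection $E(K)/nE(K) \hookrightarrow H^1(G_K, E[n])$. After passing to the finite extension $L = K(E[n])$, the image lies in $\mathrm{Hom}(G_L, E[n])$ and consists of homomorphisms unramified outside a fixed finite set $S$ of places (places of bad reduction, places above $n$, and places ramifying in $L/K$). Finiteness of this $\mathrm{Hom}$-group then reduces to finiteness of the $n$-torsion of the $S$-class group of $L$ together with Dirichlet's $S$-unit theorem.

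Next I would equip $E(K)$ with the naïve logarithmic height $h_x$ defined via the $x$-coordinate, and verify its two fundamental properties: (i) the quasi-parallelogram identity
\[
h_x(P+Q) + h_x(P-Q) = 2 h_x(P) + 2 h_x(Q) + O(1),
\]
obtained by plugging the rational addition formulas on $E$ into the definition of $h_x$; and (ii) the Northcott finiteness property, namely that $\{P \in E(K) : h_x(P) \leq B\}$ is finite for each $B > 0$. From (i) one extracts $h_x(nP) = n^2 h_x(P) + O(1)$, the crucial growth estimate for descent.

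Finally, fixing coset representatives $P_1, \dots, P_r$ for $E(K)/2E(K)$, any $P \in E(K)$ admits a chain $P - P_{i_1} = 2 Q_1$, $Q_1 - P_{i_2} = 2 Q_2$, and so on. The height of $Q_k$ decreases geometrically until it drops below a fixed bound, after which only finitely many possibilities for $Q_k$ remain. Consequently $E(K)$ is generated by $\{P_1, \dots, P_r\}$ together with this finite pool of low-height points, which is the desired finite generation; the identity role of $\mathcal{O}$ is built into the group law construction itself. The principal obstacle is the weak Mordell--Weil step: the cohomological descent requires careful control over ramification and local conditions and genuinely uses the arithmetic of $K$ through its class group and units, whereas the height inequalities---once the addition formulas are written out---are essentially formal manipulations.
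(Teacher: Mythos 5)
Your outline is correct, but note that the paper offers no proof of this proposition at all --- it is quoted as a known result with a citation to Silverman--Tate, so there is nothing internal to compare against. Your sketch is the standard general-number-field proof: weak Mordell--Weil via the Kummer sequence and Galois cohomology (with ramification controlled outside a finite set $S$, finiteness coming from the $S$-class group and $S$-unit theorem), followed by the naive height machinery and infinite descent through coset representatives of $E(K)/2E(K)$. It is worth pointing out that the cited source [JJ92] actually proceeds differently and more elementarily: Silverman and Tate treat $K=\mathbb{Q}$ and prove finiteness of $E(\mathbb{Q})/2E(\mathbb{Q})$ by an explicit $2$-descent using the rational map to the $2$-isogenous curve and the homomorphism to $\mathbb{Q}^{*}/(\mathbb{Q}^{*})^{2}$, assuming a rational point of order $2$, thereby avoiding group cohomology entirely; your cohomological route is what one needs for an arbitrary number field $K$ (Weil's extension), which is in fact the generality in which the paper states the proposition. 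Both arguments share the same skeleton (weak theorem plus height descent); yours buys full generality over $K$, while the textbook version buys accessibility at the cost of hypotheses on the $2$-torsion and the restriction to $\mathbb{Q}$.
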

The group $E(K)$ is known as the Mordell-Weil group of $E$ over $K$. The above result over $\mathbb{Q}$ is due to L.J.A Mordell and that over any number field is due to A. Weil. 

%\begin{theorem} (\it Mordell-Weil) {\bf Let $E$ be an elliptic curve over a number field $K$. Then the group $E(K)$ is a finitely generated abelian group.
%That is, there exist a finite set of points
%$\mathnormal{P}_{1},.....,\mathnormal{P}_{t} \in E(K)$
%so that every point $\mathnormal{P} \in K$ can be written as
%$$
%\mathnormal{P}=\mathnormal{n}_{1} \mathnormal{P}_{1}+\mathnormal{n}_{2} %\mathnormal{P}_{2}+.....+\mathnormal{n}_{t} \mathnormal{P}_{t}
%$$
%for some $\mathnormal{n}_{1},\mathnormal{n}_{2}....,\mathnormal{n}_{t} \in \mathbb{Z}$.}
%\end{theorem}
{\it  Mordell-Weil Theorem} states that 
$$
\text{E(K)}~\cong~ \text{E(K)}_{tors}~\times \mathbb{Z}^{r}.
$$
Here $E(K)_{tors}$ (the torsion part of $E$) is finite which consist of the elements of finite order on $E$  and the non-zero positive number  $r$ is called the rank of $E$ which gives us the information of how many independent points of infinite order $E$ has. It is exactly the number of copies of $\mathbb{Z}$ in the above theorem.

The structure of the torsion subgroup of an elliptic curve over $\mathbb{Q}$ is well understood. Mazur \cite{MT} and Nagell-Lutz theorems \cite{JJ92} provide a complete description of the torsion subgroup of any elliptic curve over $\mathbb{Q}$.
 The rank of an elliptic curve is a measure of the size of the set of rational points.
The rank is very difficult to compute and is quite mysterious too. There exists no known procedure which can compute the rank with sortie.
% An elliptic curve over the field $\mathbb{Q}$ is an algebraic curve $E$ defined by the %equation $y^2 = x^3 +ax+b$ where $a,b \in \mathbb{Z}$ and the cubic $x^3 +ax+b$ has distinct roots together with the point at infinity denoted $\mathcal{O}$.
 
%A point $(x,y)$ on the curve $E$ is said to be integer (resp. rational), if both of its coordinates are integer (resp. rational). The set of all rational points of $E$ is denoted by $E(\mathbb{Q})$. We assume the point $\mathcal{O}$ to be integer and rational.

%On an elliptic curve $E$, one can define a law of addition of a curve’s points. The definition is very simple and geometrical. In particular, the point at infinity is the neutral element and the opposite of $A = (x,y) \in E(\mathbb{Q})$ is $-A := (x,-y)$. For more details see [3].

%The set of all points of an elliptic curve $E$ together with this law of addition forms an abelian group, of which $E(\mathbb{Q})$ is a subgroup.

%The well-known and absolutely fundamental Mordell Theorem says that the group $E(\mathbb{Q})$ of all rational points of an elliptic curve $E$ is a finitely generated abelian group, which means that}
%$$
%\text{E(\mathbb{Q})}~\cong~ \mathbb{Z}^{r} \oplus \text{E(\mathbb{Q})}_{tors}.
%$$
%Here $E(\mathbb{Q})_{tors}$ (the torsion part of $E$) is finite which consist of the elements of finite order on $E$  and the non-zero positive number  $r$ is called the rank of $E$ which gives us the information of how many independent points of infinite order $E$ has.
\section{Main result}
 Now we state the first main result of the paper.
%One of the main result is as follows:
\begin{theorem} \label{thm1}
 Let
 \begin{equation} \label{eqa}
C_{m}: y^2 = x^3 - m^2 x +  p^{2}q^{2}
\end{equation}
be a family of elliptic curves with $p, q$ are distinct primes and $m$ is a positive integer. 
Then 
$$
C_{m}(\mathbb{Q})_{tors} = \left\lbrace {\mathcal{O}}\right\rbrace
$$ 
and the $\mathbb{Q}$-rank of 
%$C_{m}(\mathbb{Q})$ 
this family is at least $2$,
whenever $m \not \equiv 0 \pmod 4$ and $m \equiv 2 \pmod {64}$ with neither $p$ nor $q$ divides $m$.
\end{theorem}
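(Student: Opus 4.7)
The plan starts from three obvious rational points on $C_m$: $P_1 = (0, pq)$, $P_2 = (m, pq)$, and $P_3 = (-m, pq)$. Since they share the $y$-coordinate $pq$, they lie on the horizontal line $y = pq$, and hence $P_1 \oplus P_2 \oplus P_3 = \mathcal{O}$ in the group law. I aim to show (i) $C_m(\mathbb{Q})_{\mathrm{tors}}$ is trivial (so each $P_i$ has infinite order), and (ii) $P_1$ and $P_2$ are $\mathbb{Z}$-linearly independent; together these yield $\mathrm{rank}(C_m(\mathbb{Q})) \ge 2$.

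For (i), I first rule out $2$-torsion. A rational $2$-torsion point is an integer root of $f(x) = x^3 - m^2 x + p^2 q^2$. By the rational root theorem together with Nagell--Lutz, such a root is an integer divisor $d$ of $p^2 q^2$ satisfying $d(d-m)(d+m) = -p^2 q^2$. Writing $d = \pm p^i q^j$ with $0 \le i, j \le 2$, the ten cases $d \in \{\pm p, \pm q, \pm pq, \pm p^2 q, \pm p q^2\}$ each simplify to an equation of the form $m^2 = p \cdot (\cdots)$ or $m^2 = q \cdot (\cdots)$, forcing $p \mid m$ or $q \mid m$ and contradicting $\gcd(m,pq)=1$. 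The remaining eight cases $d \in \{\pm 1, \pm p^2, \pm q^2, \pm p^2 q^2\}$ yield Diophantine identities of the shape $m^2 = (pq)^{2k} \pm 1$ or $m^2 = p^4 \pm q^2$ (and symmetric), which are excluded modulo $16$: one has $m^2 \equiv 4 \pmod{16}$ from $m \equiv 2 \pmod{64}$, whereas the right-hand sides all reduce to $\{0, 2, 8, 10\} \pmod{16}$ using that odd squares are $\equiv 1$ or $9 \pmod{16}$ and fourth powers of odd integers are $\equiv 1 \pmod{16}$. Having excluded $2$-torsion, Mazur's theorem restricts any remaining torsion to cyclic order $3, 5, 7,$ or $9$; these can be eliminated via Nagell--Lutz (using $y^2 \mid 4m^6 - 27 p^4 q^4$) together with reduction modulo a well-chosen odd prime $\ell$ of good reduction, under which prime-to-$\ell$ torsion injects into $C_m(\mathbb{F}_\ell)$.

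For (ii), I proceed by $2$-descent. Since $f(x)$ is irreducible over $\mathbb{Q}$ by (i), setting $L = \mathbb{Q}[\alpha]$ with $f(\alpha)=0$ gives a cubic field, and the Cassels descent map
\[
\delta \colon C_m(\mathbb{Q})/2\,C_m(\mathbb{Q}) \hookrightarrow L^{\times}/L^{\times 2}, \qquad (x,y) \longmapsto x - \alpha,
\]
is injective in the absence of rational $2$-torsion. Thus $\delta(P_1) = -\alpha$ and $\delta(P_2) = m - \alpha$, and the collinearity identity forces $\delta(P_3) \equiv -m - \alpha$ modulo squares. Independence of $P_1, P_2$ modulo $2\,C_m(\mathbb{Q})$---which, since torsion is trivial, implies $\mathbb{Z}$-independence---is equivalent to showing that $-\alpha$, $m - \alpha$ and $-m - \alpha$ are each non-squares in $L^{\times}$. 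This is a local question. Since $m$ is even and $pq$ odd, $f(x) \equiv x^3 + 1 = (x+1)(x^2+x+1) \pmod 2$ with coprime factors, so Hensel's lemma produces a prime $\mathfrak{p}$ of $L$ above $2$ with $L_{\mathfrak{p}} \cong \mathbb{Q}_2$ and a root $\alpha_{\mathfrak{p}} \in \mathbb{Z}_2$ of $f$ that one computes to satisfy $\alpha_{\mathfrak{p}} \equiv 3 \pmod 8$. Then $-\alpha_{\mathfrak{p}} \equiv 5$, $m - \alpha_{\mathfrak{p}} \equiv 7$, and $-m - \alpha_{\mathfrak{p}} \equiv 3 \pmod 8$; none lies in $1 + 8\mathbb{Z}_2$, hence none is a square in $\mathbb{Z}_2^{\times}$. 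The three classes are therefore distinct and non-trivial in $\mathbb{Q}_2^{\times}/\mathbb{Q}_2^{\times 2}$, so $\delta(P_1)$ and $\delta(P_2)$ are $\mathbb{F}_2$-independent locally and hence globally.

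The main technical obstacle is the $2$-adic analysis in (ii): setting up the Hensel lift and identifying square classes requires precise control of $m$ at the prime $2$, which the hypothesis $m \equiv 2 \pmod{64}$ is calibrated to provide (and sharper than a naive $m \equiv 2 \pmod{4}$ allows for the odd-torsion exclusion in (i) and for robust Hensel lifting in (ii)). Step (i), by contrast, is essentially a finite case analysis once the mod-$16$ reductions are in hand, together with a standard reduction mod $\ell$ for the odd torsion.
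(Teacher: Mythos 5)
Your rank argument (part (ii)) is correct, but your torsion argument (part (i)) has a genuine gap, and the torsion statement is half of the theorem. The $2$-torsion exclusion is fine: the rational-root case analysis plus the comparison $m^2\equiv 4\pmod{16}$ versus right-hand sides in $\{0,2,8,10\}\pmod{16}$ works (and is in fact tighter than the paper's own Lemma on order $2$). But the elimination of odd torsion, i.e.\ of points of order $3,5,7$ (order $9$ reduces to order $3$), is only asserted: ``these can be eliminated via Nagell--Lutz together with reduction modulo a well-chosen odd prime $\ell$'' is not a proof, and it is precisely the bulk of the work. Reduction at a single auxiliary prime cannot do this uniformly in $(m,p,q)$: for instance at $\ell=5$ (available when $p,q\neq 5$) one computes $\#C_m(\mathbb{F}_5)\in\{6,8,9\}$, so $3$ may divide the group order and $3$-torsion survives the argument; since $p,q$ are arbitrary odd primes you can neither fix $\ell$ once and for all nor have you shown that every member of the family admits a good-reduction prime $\ell$ with $\#C_m(\mathbb{F}_\ell)$ prime to $3\cdot 5\cdot 7$ (if a curve did have a rational $3$-torsion point, no such $\ell$ would exist, so existence is exactly what must be proved). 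Nagell--Lutz ($y^2\mid\Delta$) does not close this. The paper instead rules out orders $3$, $5$ and $7$ by explicit division-polynomial congruence arguments (working modulo $3,4,8,12,64$ and using both $m\equiv 2\pmod{64}$ and $m\not\equiv 0\pmod 4$), and only then combines reduction at $5$, $7$ or $17$ (split according to whether $p$ or $q$ equals $5$ or $7$) with Mazur's theorem. Without an argument of that kind, $C_m(\mathbb{Q})_{tors}=\{\mathcal{O}\}$ is not established by your proposal.

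By contrast, your part (ii) is a correct and genuinely different route from the paper's. Since $f$ has no rational root, $L=\mathbb{Q}(\alpha)$ is a cubic field and the map $(x,y)\mapsto x-\alpha$ into $L^{\times}/L^{\times 2}$ has kernel $2C_m(\mathbb{Q})$; modulo $8$ one has $f\equiv x^3-4x+1$, whose unique odd root is $3$, so at the degree-one prime of $L$ above $2$ the classes of $-\alpha$, $m-\alpha$, $-m-\alpha$ are $5,7,3\pmod 8$, none a square in $\mathbb{Z}_2^{\times}$; hence $[P_1],[P_2]$ generate a Klein four-group in $C_m(\mathbb{Q})/2C_m(\mathbb{Q})$, and together with the absence of $2$-torsion this yields $\mathbb{Z}$-independence and rank at least $2$. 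Note that this needs only $m\equiv 2\pmod 4$, $p,q$ odd, and no $2$-torsion (not full triviality of torsion), whereas the paper proves by hand, via the duplication formula and congruences modulo $64$, $4$ and $16$, that $(0,pq)$, $(m,pq)$ and $(-m,-pq)$ are not doubles of rational points and then invokes Cremona's $\#\bigl(E(\mathbb{Q})/2E(\mathbb{Q})\bigr)=2^{r}$ statement for torsion-free curves. So the descent half of your plan stands; the missing piece is a genuine proof that $C_m(\mathbb{Q})$ has no points of order $3$, $5$ or $7$.
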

Here are the main steps that are been used to prove this theorem. Firstly we use the the technique of reduction modulo a prime of an elliptic curve at good reduction primes, i.e, the primes which do not divide the discriminant of the curve. Then application of Theorem \eqref{th1} gives an injective map from the group of rational torsion points $E(\mathbb{Q})_{tors}$ into the group $E(\mathbb{F}_{p})$ to arrive onto the result on the torsion part. Further  we show that our family of concern, $C_{m}$, has at least two independent rational points, showing the rank is at least $2$.

%{\bf In order to prove Theorem \eqref{thm1}, we use reduction of an elliptic curve modulo $p$. %For a fixed prime p, the %same equation for E with coefficients read in the finite field F of p elements defines an elliptic curve E(F )
%In our paper we show $C_{m}$ has Good Reduction at $p$ for a given prime that means $p$ does not divide the discrimiannt %$\Delta$ of the elliptic curve. The application of Theorem \eqref{th1} gives an injective map from the group of rational %torsion points $E(\mathbb{Q})_{tors}$ into the group $E(\mathbb{F}_{p})$. Further  we show that our family of concern, %$C_{m}$, has at least two independent rational points, showing the rank is at least 2. 
%}

 If $P = (x, y)$  is any point on $C_{m}$ then the law for doubling a point on an elliptic curve,
 denoted $2P = (x', y')$, is given by
\begin{equation} \label{eqaa}
x'= \frac{x^4+m^4+2m^{2}x^{2}-8p^{2}q^{2}x}{4y^{2}},
\end{equation}
$$
 y'= -y- \frac{3x^{2}-m^{2}}{2y} (x'-x) .
$$ 
The following result regarding the restriction of the reduction modulo $p$ map to the torsion part will be of our use.
\begin{theorem} \label{th1}
 ( \cite{HD87}, Theorem 5.1).
 Let $E$ be an elliptic curve over $\mathbb{Q}$. 
 The restriction of the reduction homomorphism $r_{p| E(\mathbb{Q})_{tors}} : E(\mathbb{Q})_{tors}  \to E_{p}(\mathbb{F}_{p})$ is injective for any odd prime $p$ where $E$ has a good reduction and $r_{2| E(\mathbb{Q})_{tors}} :  E(\mathbb{Q})_{tors} \to  E_{2}(\mathbb{F}_{2})$ has kernel at most $\mathbb{Z}/2\mathbb{Z}$ when $E$ has good reduction at $2$.
\end{theorem}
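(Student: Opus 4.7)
The plan is to split the proof into a torsion computation and an independence argument for two rational points.

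First I would bound the torsion via reduction modulo $2$. Since $m$ is even and $p, q$ are odd, the discriminant $\Delta = 4 m^6 - 27 p^4 q^4 \equiv -27 \equiv 1 \pmod 2$, so $C_m$ has good reduction at $2$. The reduced curve $y^2 = x^3 + 1$ over $\mathbb{F}_2$ contains only the three points $\mathcal{O}$, $(0, 1)$, $(1, 0)$. Theorem \ref{th1} then bounds $|C_m(\mathbb{Q})_{tors}| \leq 2 \cdot 3 = 6$, and Mazur's classification leaves only the four possibilities $\{\mathcal{O}\}$, $\mathbb{Z}/2\mathbb{Z}$, $\mathbb{Z}/3\mathbb{Z}$, $\mathbb{Z}/6\mathbb{Z}$. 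To rule out $2$-torsion I would show that $x^3 - m^2 x + p^2 q^2$ has no rational (hence no integer) root: an integer root $x_0$ must divide $p^2 q^2$, and substituting into $x_0 (x_0^2 - m^2) = -p^2 q^2$ and comparing $2$-adic valuations on the two sides (using $v_2(m) = 1$ from $m \equiv 2 \pmod{64}$ together with the hypotheses $p, q \nmid m$) produces a contradiction in each case. For $3$-torsion, setting $x(2P) = x$ in \eqref{eqaa} yields the quartic $3 x^4 - 6 m^2 x^2 + 12 p^2 q^2 x - m^4 = 0$, and a parallel rational-root analysis rules this out as well.

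For the rank lower bound I would exhibit two rational points $P_1 = (0, pq)$ and $P_2 = (-m, pq)$, both verified to lie on $C_m$ by direct substitution. Since the torsion is now known to be trivial and neither point is $\mathcal{O}$, both have infinite order. The substantive step is to prove $\mathbb{Z}$-linear independence. My plan is to assume a primitive relation $a P_1 + b P_2 = \mathcal{O}$ with $\gcd(a, b) = 1$ and derive a contradiction by tracking the $2$-adic valuations of the $x$-coordinates of $n P_1$ and $n P_2$ under iterated application of the doubling formula \eqref{eqaa}. A direct calculation gives $x(2 P_1) = m^4/(4 p^2 q^2)$ and $x(2 P_2) = m^4/(p^2 q^2) + 2 m$; these both have $v_2 = 2$ but differ in their higher-order $2$-adic structure, and the congruence $m \equiv 2 \pmod{64}$ pins this structure down finely enough to extend the comparison inductively to every multiple.

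The main obstacle is this independence step: controlling the $2$-adic shape of $x(n P_i)$ through many applications of \eqref{eqaa} requires careful bookkeeping, and the sharp congruence $m \equiv 2 \pmod{64}$ (which forces $m^2 \equiv 4 \pmod{256}$ and $m^4 \equiv 16 \pmod{2048}$) is exactly calibrated so that the leading $2$-adic terms of $x(n P_1)$ and $x(n P_2)$ stay visible, letting any putative relation be excluded.
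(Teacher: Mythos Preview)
You have mixed up the two theorems. The statement you were asked to address, Theorem~\ref{th1}, is a result quoted from Husem\"oller's book; the paper does not prove it and uses it only as a black box. Your proposal is instead an outline for Theorem~\ref{thm1}, the paper's main result. Even read that way, the outline has a genuine gap in each half.

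\textbf{Torsion.} Your reduction-mod-$2$ argument rests on the claim that $\Delta \equiv 1 \pmod 2$, but you dropped the factor $16$: the discriminant of $y^2=x^3-m^2x+p^2q^2$ is $\Delta=16(4m^6-27p^4q^4)$, as the paper records. With $m\equiv 2\pmod{64}$ the bracketed factor is odd, so $v_2(\Delta)=4$; since this is already $<12$ the model is minimal at $2$ and the curve has \emph{bad} reduction there. Concretely, the naive reduction $y^2=x^3+1$ over $\mathbb{F}_2$ is singular at $(0,1)$ (both partials vanish), so Theorem~\ref{th1} does not apply. The paper avoids this by reducing at the odd primes $5$, $7$, or $17$ (depending on whether $p$ or $q$ equals $5$ or $7$) and then invoking Lemmas~\ref{l2}--\ref{l7} to kill points of order $2,3,5,7$.

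\textbf{Rank.} Your plan to track $2$-adic valuations of $x(nP_i)$ through repeated doubling is not a recognised method for proving independence, and you give no mechanism explaining why those valuations should stay distinguishable for all $n$; the remark that $m\equiv 2\pmod{64}$ is ``exactly calibrated'' is an assertion, not an argument. The paper's route is the standard one: it shows that $[A_m]$, $[B_m]$ and $[A_m+B_m]$ are all nontrivial in $C_m(\mathbb{Q})/2C_m(\mathbb{Q})$ (Lemmas~\ref{la}, \ref{lb}, \ref{lab}), so this quotient has a subgroup of order $4$, and then combines trivial torsion with Theorem~\ref{th2} to conclude $r\ge 2$. Your points $P_1=(0,pq)$ and $P_2=(-m,pq)$ generate the same subgroup (indeed $P_2=-(A_m+B_m)$), so the same $E/2E$ computation would work for you and is what you should carry out instead.
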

%The following Mazur’s Theorem on torsion of elliptic curves and Nagell–Lutz Theorem will be required.
%\begin{theorem}(Mazur). Let $E / \mathbb{Q}$
% be an elliptic curve. Then the torsion subgroup $E_{tors} (\mathbb{Q})$
 %is exactly one of the following groups: \\
%$$
 %     \mathbb{Z} / N \mathbb{Z}, ~ ~ 1 \leq N \leq 10 ~~\text{or}~~ N=12 $$
  %   $$\mathbb{Z} /2\mathbb{Z} \oplus \mathbb{Z} / 2N \mathbb{Z}, ~ ~ 1 \leq N \leq 4 
%$$
%\end{theorem}
%\begin{theorem}(Nagell–Lutz Theorem) \label{NT}
%Let
%$$
%y^2=f(x)=x^3+ax^2+bx+c
%$$
%be a non-singular cubic curve with integer coefficients $a,b,c$, and let $D$ be the discriminant of the cubic polynomial,
%$$
%D = -4a^3c+a^2b^2+18abc-4b^3-27c^2.
%$$
%Let $P = (x,y)$ be a rational point of finite order. Then $x$ and $y$ are integers,
%and either $y = 0$, in which case $P$ has order two, or else $y$ divides $D$.
%\end{theorem}
We begin the journey towards the proof of the above result by proving a couple of lemmas 
dealing with points of order $2, 3,5$ and $7$  of the family of elliptic curves under consideration. Here for us $m \not \equiv 0 \pmod 4$ is a positive integer and $p, q$ will always be distinct odd primes.
\begin{lemma} \label{l2}
$C_{m}(\mathbb{Q})$
doesn't have a point of order $2$.
%for positive integers $m \not \equiv 0 \pmod 4$ and for distinct odd primes $p, q$.
\end{lemma}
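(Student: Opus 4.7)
The plan is to argue by the standard 2-torsion criterion: a rational point of order $2$ on $C_m$ is precisely of the form $(x_0,0)$ with $x_0 \in \mathbb{Q}$ satisfying $x_0^3 - m^2 x_0 + p^2 q^2 = 0$, and I will show no such $x_0$ exists.

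First, since the cubic $f(x) = x^3 - m^2 x + p^2 q^2$ is monic with integer coefficients, the rational root theorem forces $x_0 \in \mathbb{Z}$ with $x_0 \mid p^2 q^2$, so $x_0 = \pm p^a q^b$ with $0 \le a, b \le 2$. Rewriting $f(x_0) = 0$ as
\[
x_0 (m - x_0)(m + x_0) = p^2 q^2,
\]
the right-hand side is positive, so either $0 < x_0 < m$ or $x_0 < -m$. I would then leverage $p \nmid m$: if $a \ge 1$ then $m^2 - x_0^2 \equiv m^2 \not\equiv 0 \pmod{p}$, so $p \nmid (m - x_0)(m + x_0) = p^{2-a} q^{2-b}$, forcing $a = 2$. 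The same argument with $q$ shows $b \in \{0, 2\}$, cutting the candidates to the eight values $x_0 \in \{\pm 1,\ \pm p^2,\ \pm q^2,\ \pm p^2 q^2\}$.

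Next I would dispose of each case. For $x_0 = \pm 1$ and $x_0 = \pm p^2 q^2$, the equation collapses to $(A - B)(A + B) = 1$ with positive integers $A, B$ and $B > 0$, which is impossible. For $x_0 = \pm p^2$ (the case $\pm q^2$ being symmetric in $p, q$), the equation becomes $(m \mp p^2)(m \pm p^2) = q^2$. A parity check eliminates the subcase $m$ odd (both factors would be even while $q^2$ is odd); when $m$ is even, the hypothesis $p \nmid m$ makes the two factors coprime, hence (up to order) they must be $1$ and $q^2$.

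The hard part will be closing the two resulting Diophantine equations. For $x_0 = +p^2$ one gets $q^2 = 2p^2 + 1$; since $p, q$ are odd, $p^2 \equiv q^2 \equiv 1 \pmod 8$, yielding $1 \equiv 3 \pmod 8$, a contradiction. For $x_0 = -p^2$ one gets $m = p^2 - 1$ and $q^2 = 2p^2 - 1$; here mod $8$ is inconclusive, and I would instead invoke $m \not\equiv 0 \pmod 4$: with $p$ odd, $m = (p-1)(p+1)$ is the product of two consecutive even integers, so $8 \mid m$, contradicting $4 \nmid m$. This, together with the symmetric cases $x_0 = \pm q^2$, exhausts every possibility.
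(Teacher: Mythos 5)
Your proof is correct, and it is in fact more complete than the paper's own argument, though it starts from the same place. The paper also reduces to $y=0$ with an integral $x$ dividing $p^2q^2$, but then merely lists the candidate values of $m^2=x^2+p^2q^2/x$ and asserts a contradiction ``with the fact that $m$ is an integer'', without showing that none of these values is a perfect square and without ever invoking the hypotheses $p\nmid m$, $q\nmid m$, $m\not\equiv 0\pmod 4$. Your route supplies exactly the missing content: writing $p^2q^2=x_0(m-x_0)(m+x_0)$, using $p\nmid m$ and $q\nmid m$ both to cut the eighteen divisors down to $x_0\in\{\pm1,\pm p^2,\pm q^2,\pm p^2q^2\}$ and to make the factors $m\mp p^2$ coprime, and then killing the two genuinely nontrivial Pell-type cases, $q^2=2p^2+1$ by reduction mod $8$ and $q^2=2p^2-1$, $m=p^2-1$ via $8\mid p^2-1$ against $4\nmid m$. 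That these hypotheses are truly needed---so that the paper's bare assertion cannot be a complete proof---is shown by $m=24$, $p=5$, $q=7$: the point $(-25,0)$ is a $2$-torsion point on $y^2=x^3-576x+1225$, and only the condition $m\not\equiv 0\pmod 4$ fails there. Two cosmetic slips in your write-up, neither affecting the argument: in the case $x_0=-p^2$ the displayed identity should be $(p^2-m)(p^2+m)=q^2$ rather than $(m+p^2)(m-p^2)=q^2$ (your subsequent conclusions $m=p^2-1$, $q^2=2p^2-1$ are the correct ones), and for $x_0=-1$ the equation is $m^2+p^2q^2=1$ (or simply $x_0<-m$ is impossible), not literally of the shape $(A-B)(A+B)=1$.
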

\begin{proof}
Suppose $C_{m}(\mathbb{Q})$ has a point $A=(x,y)$  of order $2$, then $2A= \left\lbrace {\mathcal{O}}\right\rbrace \Leftrightarrow A= -A \Leftrightarrow y= 0  ~\text{and}~  x \ne 0 .$ Therefore
 $ x^3 - m^2 x + p^2 q^2 = 0.$
Since the order of $A$ is finite, so $x$ must be an integer (by Nagell-Lutz theorem \cite{JJ92}). Thus 
$$
m^2 = x^2 + \frac{p^{2}q^{2}}{x}.
$$
Which implies that 
$$ 
x \in  \{ \pm 1, \pm p, \pm p^{2}, \pm q, \pm q^{2}, \pm pq, \pm pq^{2}, \pm p^{2}q , \pm p^{2} q^{2} \}.
$$
Therefore the possible choices of $m^2$ fall in
 $$
 \{  1 \pm  p^{2} q^2, p^{2} \pm pq^2,  p^{4} \pm q^{2}, q^{2} \pm p^{2}q, q^{4} \pm p^{2}, p^{2}q^{2} \pm pq, p^{2}q^{4} \pm p, \}
 $$
 and 
 $$
 \{
 p^{4}q^{2} \pm q, p^{4} q^{4} \pm 1 \}
 $$
which is a contradiction to the fact that $m$ is an integer.  
\end{proof}
\begin{lemma} \label{l3}
$C_{m}(\mathbb{Q})$ does not contain a point of order $3$.
%for positive integers $m$ with $m \not \equiv 0 \pmod 4$ and for distinct odd primes $p, q $.
\end{lemma}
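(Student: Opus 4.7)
The plan is to use the $3$-division polynomial of $C_m$. A rational point $P = (x, y)$ on $C_m$ has order $3$ if and only if $2P = -P$; combining the doubling formula \eqref{eqaa} with $y^{2} = x^{3} - m^{2}x + p^{2}q^{2}$ and clearing denominators, this is equivalent to the integer polynomial equation
\begin{equation*}
\psi_{3}(x) := 3x^{4} - 6 m^{2} x^{2} + 12 p^{2} q^{2}\, x - m^{4} = 0.
\end{equation*}
By Nagell--Lutz (invoked already in the proof of Lemma \ref{l2}), a torsion point has integer coordinates, so it suffices to rule out integer roots of $\psi_{3}$ under the stated hypotheses on $m, p, q$.

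First I would extract the $2$-adic valuation of any such root. Write $m = 2 m_{0}$ with $m_{0} \equiv 1 \pmod{32}$ (which follows from $m \equiv 2 \pmod{64}$), and use $p^{2}q^{2} \equiv 1 \pmod 8$. The reduction of $\psi_{3}(x) = 0$ modulo $8$ forces $x$ to be even; substituting $x = 2y$ and reducing again modulo $2$ forces $y$ to be even as well. Thus $x = 4w$ for some $w \in \mathbb{Z}$, and after substitution and division by $16$ the equation becomes
\begin{equation*}
48\, w^{4} - 24\, m_{0}^{2}\, w^{2} + 3\, p^{2} q^{2}\, w - m_{0}^{4} = 0,
\end{equation*}
with $w$ necessarily odd. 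Reducing this equation modulo $3$ kills the first three terms, leaving $m_{0}^{4} \equiv 0 \pmod 3$, hence $3 \mid m$. If $3 \nmid m$, we already have the desired contradiction.

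The main obstacle is the residual case $3 \mid m$. Here I would write $m_{0} = 3 m_{2}$, so that $m = 6 m_{2}$ with $m_{2} \equiv 11 \pmod{32}$, and divide through by $3$ to obtain
\begin{equation*}
16 w^{4} - 72 m_{2}^{2} w^{2} + p^{2} q^{2}\, w - 27 m_{2}^{4} = 0.
\end{equation*}
I would then case-split on the divisibility of $p$, $q$, $w$, and $m_{2}$ by $3$. In the generic subcase $3 \mid w$ with $3 \nmid p q m_{2}$, an iterated $3$-adic descent (each step tracking how the factor $27 m_{2}^{4}$ on the right is consumed) together with the rigid $2$-adic control $m_{2} \equiv 11 \pmod{32}$ and the coprimality hypothesis $p, q \nmid m$ is expected to yield a contradiction, supplemented where needed by an analysis modulo $p$ (where $\psi_3$ reduces to $3x^4 - 6 m^2 x^2 - m^4$ and a solution forces $3$ to be a quadratic residue). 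The remaining subcases, where $3 \in \{p, q\}$ or $3 \mid m_{2}$, are handled analogously by exploiting the explicit factor of $9$ in $p^{2} q^{2}$ or by further $3$-divisibility of $m$. I expect the subcase $3 \mid w$ with $3 \nmid p q m_{2}$ to be the most delicate part of the argument.
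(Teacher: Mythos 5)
Your reduction to the integral quartic $3x^{4}-6m^{2}x^{2}+12p^{2}q^{2}x-m^{4}=0$ via $2P=-P$, \eqref{eqaa} and Nagell--Lutz is the same starting point as the paper, and your $2$-adic and modulo $3$ steps are correct as far as they go: with $m\equiv 2\pmod{64}$ one has $m^{4}\equiv 0$, $6m^{2}x^{2}\equiv 0$ and $12p^{2}q^{2}x\equiv 4x\pmod 8$, so odd $x$ is impossible, the substitutions $x=2y$ and then $x=4w$ are forced, and reducing $48w^{4}-24m_{0}^{2}w^{2}+3p^{2}q^{2}w-m_{0}^{4}=0$ modulo $3$ gives $3\mid m$. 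Incidentally this diverges from the paper, which asserts that $x$ must be odd and then analyzes the perfect-square condition $x(x^{3}+p^{2}q^{2})=3n^{2}$ coming from $(m^{2}+3x^{2})^{2}=12x(x^{3}+p^{2}q^{2})$; your modulo-$8$ computation (which checks out) forces $x\equiv 0\pmod 4$ for any hypothetical solution, so the two derivations conflict, and it is the paper's step excluding $x\equiv 0\pmod 4$ (its reduction modulo $12$ only yields $6\mid m$, not an impossibility) that does not survive scrutiny.

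The genuine gap is that the residual case $3\mid m$, which you yourself flag as the main obstacle, is never actually proved. After arriving at $16w^{4}-72m_{2}^{2}w^{2}+p^{2}q^{2}w-27m_{2}^{4}=0$ you only announce an ``iterated $3$-adic descent'' that ``is expected to yield a contradiction''; no descent step is exhibited, and it is not clear that one exists. Moreover the case split itself is incomplete: reducing your equation modulo $9$ (with $3\nmid pq$) gives $w\bigl(16w^{3}+p^{2}q^{2}\bigr)\equiv 0\pmod 9$, and this is satisfied not only when $3\mid w$ but also when $3\nmid w$, $w\equiv -1\pmod 3$ and $p^{2}q^{2}\equiv 7\pmod 9$ (e.g.\ $p=7$, $q=11$); in that branch no further $3$-divisibility is forced, so the proposed descent never gets started. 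The supplementary congruence modulo $p$ only tells you that $3$ is a quadratic residue modulo $p$, a restriction satisfied by infinitely many admissible $p$, so it cannot close the argument either. As written, your proposal establishes the lemma only when $3\nmid m$, and the hardest case remains open.
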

\begin{proof}
Suppose on the contrary that it has a point of order $3$ and call it $A$. Then $3A = \{ {\mathcal{O}} \}$, or equivalently, $2A = -A$ or $x$ - coordinate of $(2A)$ = $x$ - coordinate of $(-A)$, where $A = (x,y), 2A = (x',y')$ ($x'$ and $y'$ are given  in \eqref{eqaa}). Putting the value of $x'$ upon simplification give
\begin{equation}  \label{eq3}
m^4+6m^{2}x^{2}-3x^4-12p^{2}q^{2}x= 0.
\end{equation}
%{\bf Now  $x=\frac{u}{v}$ with $(u,v)=1$ gives
%$$
%3u^{4}-m^{4}v^{4}-6m^{2}u^{2}v^{2}+12p^{2}q^{2}v^{3}=0.
%$$
%This implies $v^{2}|3u^{4}$, therefore $v^{2} =1$ and $x \in \mathbb{Z}$.\\
%Using Nagell–Lutz Theorem \eqref{NT}, we can directly say that $x \in \mathbb{Z}$.}  \\
Since the order of $A$ is finite, so $x$ must be an integer (by Nagell-Lutz theorem \cite{JJ92}).

Equation \eqref{eq3} is a polynomial in  $m^{2}$ with discriminant
$$
\Delta (m) = 48x(x^{3}+p^{2}q^{2})
$$ 
and
$$
m^{2}=\frac{-6x^{2}+\sqrt {\Delta(m)}}{2}.
$$
The second relation states that 
$$
\Delta(m) = (2m^{2} + 6x^{2})^{2}
$$ 
is  square of an integer and so there exists $n \in \mathbb{N}$  such that 
\begin{equation} \label{eqn}
 x(x^{3} + p^{2}q^{2}) = 3 n^{2}. 
 \end{equation}
Now the possibilities of
$$
(x, x^{3} + p^{2}q^{2}) \in \{p,q,p^{2},q^{2},p^{2}q,pq^{2},pq ~\text{or}~ p^{2}q^{2}\}.
$$ 
Then from \eqref{eqa}, \eqref{eq3} and \eqref{eqn}, we arrive at a contradiction by $(m,p)=1, (m,q)=1$ and $(n,3)=1$.\\
Let us see the reasoning that $(n,3) =1$.\vspace*{2mm}\\
%\newenvironment{claim}[1]{\par\noindent{Claim:}\space#1}{}
%\newenvironment{claimproof}[1]{\par\noindent{Proof:}\space#1}{\hfill $\blacksquare$}
%{\bf \begin{claim} 
We have the relationship \eqref{eqn}

and we need to conclude that $(n,3)=1$.  Also we know that
\begin{equation} \label{aaaa}
(m^{2} + 3x^{2})^{2} = 12x(x^{3}+p^{2}q^{2}).
\end{equation}
 First we show that $x$ is odd. If possible, let $x$ is even. Then
there are  two cases to be considered:
\begin{itemize}
    \item If $x \equiv 0 \pmod{4}$ and then $x=4k, k \in \mathbb{Z}$.
Since $m \equiv 2 \pmod{64}$ we have\ $m=2 +64 k_{1}$. Putting the values of $m$ and $x$ in \eqref{eq3}, we obtain
\begin{equation} \label{qq}
    (2+64k_{1})^4+96k^{2}(2+64k_{1})^2-768k^4-48p^2q^2k=0
\end{equation}

Now reducing \eqref{qq} modulo $12$, we get
  $$
    2 + 64k_{1}  \equiv 0 \pmod{12}.
    $$
    This implies 
    $$
    2k_{1} \equiv 5 \pmod{6},
    $$
which is not possible.
 \item If $x \equiv 2 \pmod{4}$ and then $x=2+4k$. Again using 
 $m \equiv 2 \pmod{64}$ gives $m=2 +64 k_{1}$. Further reducing \eqref{aaaa} modulo $32$, we get   
 $$
    3(1+2k) \equiv 0 \pmod{4},
    $$
    which is also not possible. Thus $x$ is odd. 
    
\end{itemize}
%\end{claim}
%\begin{claimproof}
If possible let $n \equiv 0 \pmod 3$ and in that case using \eqref{eqn} we write
 %%then using $n \equiv 0 \pmod 3$.
%Using \eqref{eqn} we can write
\begin{equation}\label{n3}
    (2m^2+6x^2)^2=48\times 3n^2.
\end{equation}

This gives $m \equiv 0 \pmod 3$ and let $m=3m_{1}$. Further 
putting the value of $m$ in \eqref{n3} and using $n \equiv 0 \pmod 3$, we get $x \equiv 0 \pmod 3$. Let $x=3x_{1}$.%, where we assume $a>b$.\\
%Hence we have $x=3^bx_{1}$ and $m=3^am_{1}$ with $3 \not | m_{1}x_{1}$

Now substituting values of $m$ and $x$ in \eqref{eq3}, we obtain
\begin{equation} \label{qqq}
    9m_{1}^4+54m_{1}^2x_{1}^2-27x_{1}^4-4p^2q^2x_{1}=0.
\end{equation}
%Thus we get a contradiction via $(m,p)=1$ and $(m,q)=1$.
%This gives
Thus
$9 | 4p^2q^2x_{1}.
$
Which implies $9 \mid x_{1}$ (since $(m,p)=1$) and write $x_{1} = 9x_{2}$. 
Thus \eqref{qqq} transforms to (substituting $x_{1} = 9x_{2}$)

\begin{equation} \label{aaa}
    m_{1}^{4}+2(3^5)m_{1}^{2}x_{2}^{2}-3^{9}x_{2}^{4}-4p^2q^2x_{2}=0.
\end{equation}
Finally using the facts that $x$ is odd and  $m \equiv 2 \pmod{64} $ and 
%As we know $x$ is odd and  $m \equiv 2 \pmod{64} $. 
reducing \eqref{aaa} modulo 8, we conclude

$$
4 x_{2} \equiv 5 \pmod{8}.
$$
This is impossible.

Hence $(x, x^{3} + p^{2}q^{2}) = 1$ and two cases may occur:
$$
y^{2}=x^{3}-m^{2}x+p^{2}q^{2}~ (\text{where}~p \ne q),
$$
\begin{tabular}{r|p{11cm}}
$
\begin{cases}
x= \alpha^{2},\\
x^{3} + p^{2}q^{2} = 3 \beta^{2}
\end{cases}
$
with $(\alpha, 3\beta)=1$
&
$\begin{cases}
x= 3\alpha^{2},\\
x^{3} + p^{2}q^{2} = \beta^{2}
\end{cases}$
with $(3\alpha, \beta)=1$
\end{tabular} \\
{\bf Case (I)}
As  $\alpha \not \equiv 0 \pmod 3$,
substituting the value of $x$ in 
$x^{3} + p^{2}q^{2} = 3 \beta^{2}$ and reducing it  modulo $3$ entails
$$
\alpha^{6} + p^{2}q^{2} \equiv 0 \pmod 3.
$$
Since 
$\alpha \not \equiv 0 \pmod 3$ this implies $\alpha^{6} \equiv 1 \pmod 3$ and we get
$$
p^{2}q^{2} \equiv 2 \pmod 3,
$$ 
which is not possible.\\
{\bf Case (II)}
Substituting $x$ in 
$x^{3} + p^{2}q^{2} =  \beta^{2}$ and reading it modulo $3$, we get
$ p^{2}q^{2} \equiv \beta^{2} \pmod 3$.
As  $\beta \not \equiv  0 \pmod 3$, this would give $\beta^{2} \equiv 1 \pmod 3$. Thus 
\begin{equation} \label{eq41}
p^{2}q^{2}  \equiv 1 \pmod 3.
\end{equation}
 Now $m$ is a multiple of $3$ (from \eqref{eq3}) and let $m=3m_{1}$. 
Since $x$ is a multiple of $3$ and $x=3 \alpha^{2}$, putting the value of $m$ and $x$ in \eqref{eq3} entails
$$
9m_{1}^{4}+36m_{1}^{2} \alpha^{4}-27 \alpha^{8}-4p^{2}q^{2} \alpha^{2} =0.
$$
This implies $3 | (pq\alpha)$ and that give rise to two possibilities: \\
{\bf (i)} If $3 | p$ then $p=3$  and \eqref{eq41} gives the contradiction.
The case when $3|q$ can be tackled analogously.\\
{\bf (ii)}  $3\not\vert\alpha$ as $(3, \alpha)=1$. 
Suppose $(3, \alpha) \ne 1$, then
$\alpha = 3^a \alpha_{1}.$
Now we are in the case when
\begin{equation} \label{a1}
x= 3\alpha^{2}.    
\end{equation}
Putting the value of $\alpha$ in \eqref{a1}, we get
$x=3^{2a+1}\alpha_{1}^{2}$.\\
Now we consider \eqref{eqn} and put the value of $x$, we get a contradiction via $(n,3) \ne 1.$

\end{proof}
Let $P  \in C_m(\mathbb{Q}) = (x,y), 2P = (x',y')$ (notations are as before) and then doubling of $2P$, i.e. 
 $4P = (x'', y'')$, where
\begin{eqnarray}\label{rs11}
x''&=& \frac{{x'}^4+m^4+2m^{2}{x'}^{2}-8p^{2}q^{2}x'}{4{y'}^{2}},\nonumber\\
 y''&=& -y'- \frac{3{x'}^{2}-m^{2}}{2y'} (x''-x').
\end{eqnarray}
\begin{lemma} \label{l5}
$C_{m}(\mathbb{Q})$ does not contain a point of order $5$.
%for positive integers $m \not \equiv 0 \pmod 4$ and for distinct odd primes $p,q $.
\end{lemma}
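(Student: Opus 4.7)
The plan is to mirror Lemma \ref{l3}, replacing the duplication identity characterizing order $3$ by the quadrupling identity characterizing order $5$. Suppose for contradiction that $P=(x,y)\in C_m(\mathbb{Q})$ has order $5$. Then $5P=\mathcal{O}$ is equivalent to $4P=-P$, and on $x$-coordinates this reads $x''=x$, with $x''$ as in \eqref{rs11}. By the Nagell-Lutz theorem, both $x$ and $y$ must be integers.

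The first step is to substitute the duplication formulas \eqref{eqaa} for $(x',y')$ into \eqref{rs11}, then clear denominators in $x''-x=0$ to extract a single integer polynomial identity
\[
F(x;m,p,q)=0,
\]
which is, up to a factor involving $y^{2}$, the $5$-division polynomial of $C_m$ evaluated at $x$. This $F$ has degree $12$ in $x$ with constant term an explicit monomial in $m,p,q$, so integrality of $x$ together with $F(x)=0$ restricts $x$ to divisors of $p^{2}q^{2}$, namely
\[
x \in \{\pm 1,\pm p,\pm q,\pm p^{2},\pm q^{2},\pm pq,\pm p^{2}q,\pm pq^{2},\pm p^{2}q^{2}\}.
\]

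For each candidate value of $x$, I would substitute back into $F$ and derive a contradiction using the three ingredients already deployed in Lemma \ref{l3}: first, a $2$-adic parity analysis exploiting $m\not\equiv 0\pmod 4$ and $m\equiv 2\pmod{64}$ (as in Lemma \ref{l3}, where a mixture of reductions modulo $8$, $12$ and $32$ was decisive, and which is expected first to force $x$ odd and then to impose finer congruences); second, the coprimality $(m,p)=(m,q)=1$, which prevents $p$ or $q$ from cancelling against any power of $m$ in $F$; and third, reduction modulo $3$ in the residual cases, along the lines of Cases (I) and (II) in Lemma \ref{l3}.

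The hard part will be purely computational. Expanding $x''-x$ is a substantial symbolic calculation, and the divisor-case analysis has more branches than for order $3$ because $F$ has higher degree and the powers of $m$, $p$, $q$ enter at more levels. Conceptually, however, no new tools beyond those of Lemma \ref{l3} are required: the rigidity of $m\equiv 2\pmod{64}$, together with $(m,pq)=1$ and reduction modulo $3$, is expected to eliminate every candidate and thereby yield the desired contradiction.
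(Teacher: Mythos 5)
There is a genuine gap at the heart of your plan. Your finite list of candidates for $x$ rests on the claim that the integer polynomial $F(x;m,p,q)=0$ obtained from $x''=x$ (essentially the $5$-division polynomial of $C_m$) has ``constant term an explicit monomial in $m,p,q$,'' so that integrality of $x$ forces $x\mid p^{2}q^{2}$. That is false. For $y^{2}=x^{3}+Ax+B$ the $5$-division polynomial is $5x^{12}+62Ax^{10}+380Bx^{9}+\cdots+(A^{6}-32A^{3}B^{2}-256B^{4})$, so with $A=-m^{2}$, $B=p^{2}q^{2}$ the constant term is $m^{12}+32m^{6}p^{4}q^{4}-256p^{8}q^{8}$, which is not $\pm$ a product of powers of $p$ and $q$ and depends heavily on $m$. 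The rational root theorem then only gives $x\mid m^{12}+32m^{6}p^{4}q^{4}-256p^{8}q^{8}$, which does not yield your list $\{\pm1,\pm p,\dots,\pm p^{2}q^{2}\}$; the analogy with Lemma \ref{l2}, where the cubic's constant term really is $p^{2}q^{2}$, does not transfer to the quintic case. Moreover, even granting a finite list, your elimination of the candidates is only announced (``I would substitute back and derive a contradiction using\dots''), not carried out, so the argument is a programme rather than a proof.

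The paper's actual proof is much lighter and avoids division-polynomial root-hunting entirely: writing the condition $x''=x$ as the identity \eqref{eq21} and reducing modulo $4$, one finds that if $x$ is even the congruence forces $m\equiv 0\pmod 4$, contradicting $m\not\equiv 0\pmod 4$, while if $x$ is odd it forces $(m^{2}+1)^{8}\equiv 0\pmod 4$, i.e.\ $m$ odd, contradicting $m\equiv 2\pmod 4$ (which follows from $m\equiv 2\pmod{64}$). If you want to salvage your approach, you should drop the divisor restriction and instead exploit the congruence conditions on $m$ directly in the quadrupling identity, exactly as you already do in your ``$2$-adic parity analysis'' ingredient; that single ingredient, pushed through modulo $4$, is in fact the whole proof.
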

\begin{proof}
 Suppose $C_m$ has a 
 order $5$ point $A$. Then $5A = \{ {\mathcal{O}} \}$, or equivalently  $4A = -A$ or $x$- coordinate of $(4A) =  x$- coordinate of $(-A)$, where $A = (x,y), 4A = (x'',y'')$ ($x''$ and $y''$ are given in \eqref{rs11}). Upon simplification after putting the value of $x''$ entails
\begin{eqnarray} \label{eq21}
%\begin{aligned}
 &&(x^{4}+m^4+2m^2x^2-8p^2q^2x)^4+256m^4y^8\nonumber\\
 &+&32 m^2y^4(x^4+m^4+2m^2x^2-8p^2q^2x)^2\nonumber\\
&-&512 p^2q^2y^6(x^4+m^4+2m^2x^2-8p^2q^2x)\nonumber\\
&=&256 xy^6 \times \nonumber\\
&[&\hspace*{-4mm}-2y^2-(3x^2-m^2)(\frac{x^4+m^4+2m^{2}x^{2}-8p^{2}q^{2}x}{4y^{2}}-x)]^2.
%\end{aligned}
\end{eqnarray}
If $x$ is even and we read \eqref{eq21} modulo $4$, that would give $m \equiv 0 \pmod 4$. Which is not possible as we have assumed $m \not\equiv 0 \pmod 4$.

In the case when $x$ is odd, again reducing  \eqref{eq21} modulo $4$ will give
$$
(m^2+1)^8 \equiv 0 \pmod 4.
$$
This implies $m \equiv1,3 \pmod 4$,
which is again not possible as we have assumed $m \equiv 2 \pmod 4.$
\end{proof}
Appealing to  the addition law on $C_m$ once more, 
$$
6P = 2P \oplus  4P = (x''', y''') ~~(\text{say}),
$$ 
with 
 %$x$-coordinate of $6P$ is given by
\begin{eqnarray*}
x''' &=& \frac{(y''-y')^2}{(x''-x)^2}-x'-x''.\end{eqnarray*}
 Here $x''$ and $y''$ are given  in \eqref{rs11}.
 
We now proceed to rule out the existence of order $7$-point on $C_m$.
\begin{lemma} \label{l7}
$C_{m}(\mathbb{Q})$ does not have a point of order $7$.
%for any positive integers $m \not \equiv 0 \pmod 4$ and for distinct odd primes $p, q $.
\end{lemma}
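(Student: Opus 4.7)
The plan is to mirror the argument given for points of order $5$ in Lemma \ref{l5}, using now the formula for $6P$ displayed just above the statement. Suppose, for contradiction, that $A = (x,y) \in C_m(\mathbb{Q})$ has order $7$. Then $7A = \mathcal{O}$ is equivalent to $6A = -A$; equating $x$-coordinates and writing $2A = (x', y')$, $4A = (x'', y'')$, $6A = (x''', y''')$, this reads
\begin{equation*}
(y'' - y')^2 = (x + x' + x'')(x'' - x')^2.
\end{equation*}

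Next, I would substitute the rational expressions for $x', y'$ from \eqref{eqaa} and $x'', y''$ from \eqref{rs11}, clear denominators, and use the curve equation $y^2 = x^3 - m^2 x + p^2 q^2$ repeatedly to eliminate the even powers of $y$ that enter through $(y')^2$ and $(y'')^2$. The result is a polynomial identity $F(x,m,p,q) = 0$ with integer coefficients. By the Nagell--Lutz theorem \cite{JJ92}, $x$ must be an integer, so $F(x,m,p,q) = 0$ is a genuine Diophantine obstruction.

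The argument would then proceed by a parity case-split on $x$, in the spirit of Lemma \ref{l5}. If $x$ is even, reducing $F \equiv 0 \pmod 4$ should isolate a term forcing $m \equiv 0 \pmod 4$, contradicting the hypothesis $m \not\equiv 0 \pmod 4$. If $x$ is odd, one again reduces modulo $4$ (refining to modulo $64$ if necessary, invoking $m \equiv 2 \pmod{64}$) to obtain a congruence of the form $(m^2 + c)^k \equiv 0 \pmod 4$ with $c$ odd, forcing $m$ to be odd and again contradicting the hypothesis.

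The main difficulty is not conceptual but computational: the polynomial $F$ produced by the $6P$ formula is of substantially higher degree than the quartic that appeared in the order-$5$ argument, since one must compose the doubling map twice and then add the results. The bulk of the work therefore lies in organising the algebra so that the low-order terms in $m$ and in $x$ modulo $4$ (and modulo $64$) remain transparent after the substitutions and the successive eliminations of $y^2$. I expect the final contradiction to arise by exactly the same mechanism seen in Lemmas \ref{l3} and \ref{l5}, namely a forced parity of $m$ incompatible with the standing congruence hypotheses, so the only real cost is the bookkeeping of the expansion.
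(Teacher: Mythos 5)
Your strategy is the same as the paper's: write $7A=\mathcal{O}$ as $6A=-A$, equate $x$-coordinates using $6P = 2P \oplus 4P$, clear denominators to get an integral polynomial relation (Nagell--Lutz gives $x\in\mathbb{Z}$), and then derive a congruence contradiction split on the parity of $x$, using $m\not\equiv 0\pmod 4$ and $m\equiv 2\pmod{64}$. (Incidentally, your denominator $(x''-x')^2$ is the right one; the displayed formula in the paper has a typo.) However, as written your argument has a genuine gap: the entire content of the lemma is the verification that the cleared-out relation really does collapse, modulo small powers of $2$, to an impossible congruence, and you never carry this out --- you only assert that it ``should'' force $m\equiv 0\pmod 4$ in the even case and ``expect'' a contradiction in the odd case. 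Deferring this to bookkeeping is not a proof, because nothing in the structure of the $6P$ formula guarantees in advance that the mod-$4$ or mod-$64$ reduction is nondegenerate (e.g.\ that the relevant terms do not all vanish modulo the chosen modulus).

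Moreover, your predicted mechanism for the odd-$x$ case is not what actually happens. Reducing the order-$7$ relation modulo $4$ when $x$ is odd does not produce a clean identity of the form $(m^2+c)^k\equiv 0\pmod 4$ forcing $m$ odd, as it did for order $5$. In the paper's computation one must go to modulo $8$, substitute the explicit mod-$8$ expressions for $x'$, $x'^2$, $x'^4$, $y'$, $y'^2$ coming from the doubling formula \eqref{eqaa} (using $x^2\equiv 1\pmod 8$), and only after that substitution does the hypothesis $m\equiv 2\pmod{64}$ yield the contradiction $5\equiv 0\pmod 8$. So the step you describe for odd $x$ would fail as stated, and the refinement you gesture at (``modulo $64$ if necessary'') is precisely the nontrivial part that needs to be exhibited, not assumed.
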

\begin{proof}
Let $A=(x,y) \in C_{m}(\mathbb{Q})$ be of order $7$. Then

$7A= \left\lbrace {\mathcal{O}}\right\rbrace \Leftrightarrow 6A= -A \Leftrightarrow$ $x$-coordinate of $(6A)$ = $x$-coordinate of $(-A)$. 

Thereafter performing some elementary simplifications we arrive at
\begin{eqnarray}\label{equ}
%\begin{aligned}
&& 16y^{2}y'^4[4y'^{2}+(3x'^2-m^{2})(x''-x')]^2\nonumber\\
&-&(x'^4+m^4+2m^2x'^2-8p^2q^2x'-4xy'^2)^2 \nonumber\\ 
&& [y'^2 (x^4+m^4  +2m^2x^2-8p^2q^2x) +  y^2(x'^4+m^4+2m^2x'^2-8p^2q^2x')]\nonumber\\ 
&=& 4xy^2y'^2(x'^4+m^4+2m^2x'^2-8p^2q^2x'-4xy'^2)^2.
%\end{aligned}
\end{eqnarray}
We reduce \eqref{equ} modulo $4$ to get
\begin{eqnarray} \label{eq4}
 &-&(x'^4+m^4+2m^2x'^2)^2 [y'^2 (x^4+m^4  +2m^2x^2) +  y^2(x'^4+m^4+2m^2x'^2)]\nonumber\\
 &\equiv& 0 ~ \pmod 4.
\end{eqnarray}
Now  two cases can occur:\\
{\bf Case I.}  If $x \equiv 0 \pmod 2.$ 
In this case, 
$$
m \equiv 0 \pmod 4
$$
and that  is not possible because by assumption $m \not \equiv 0 \pmod{4}$.\\
{\bf Case II.}  If $x \not \equiv 0 \pmod 2$. 
%Again  reducing \eqref{eq4} modulo $4$, we obtain
%\begin{eqnarray}\label{equ1}
% &-&(x'^4+m^4+2m^2x'^2)^2 [y'^2 (1+m^4  +2m^2) +  y^2(x'^4+m^4+2m^2x'^2)]\nonumber\\
% &\equiv& 0 ~ \pmod 4.
%\end{eqnarray}
%Now substituting $x'$ and $y'$ in \eqref{equ1} give
%$$
%(1+m^4)^{12}(x-m^2x+1)] = 0 ~ \pmod 4.
%$$
%This implies either $ (1+m^4)^{12} = 0 ~ \pmod 4$ or  $(x-m^2x+1)= 0 ~ \pmod 4.$

%When $(1+m^4)^{12} \equiv 0 ~ \pmod 4$ then $m^4 \equiv 3 ~ \pmod 4$ and that is not possible.

%Let us deal with the case when $(x-m^2x+1)] \equiv 0 ~ \pmod 4$.This necessitate consideration of two sub cases:\\
%{(\bf i)} 
%if $x \equiv 1 \pmod 4$, then $m^{2} \equiv 2 \pmod 4$, which is not possible.\\
%{(\bf ii)} 
%if $x \equiv 3 \pmod 4$, then again we get into a contradiction by $m \equiv 0 \pmod 4$.\\
In this case $x^2 \equiv 1 \pmod{8}$ as $x$ is odd.
Now reducing \eqref{equ} modulo $8$, we obtain
\begin{eqnarray}\label{equ1}
&&
-(x'^4+m^4+2m^2x'^2)^2 \nonumber\\ 
&& [y'^2 (1+m^4  +2m^2) +  y^2(x'^4+m^4+2m^2x'^2)]\nonumber\\ 
&=& 4xy^2y'^2(x'^4+m^4+2m^2x'^2)^2 \pmod{8}.
\end{eqnarray}
Further from \eqref{eqaa} we see that
$$
x'= \frac{1+m^4+2m^2}{4y^2} \pmod{8}
$$
$$
x'^{2}= \frac{(1+m^4)^2+4m^4+4m^2(1+m^4)}{16y^4} \pmod {8}
$$
$$
x'^{4}= \frac{(1+m^4)^4}{16^2y^8} \pmod{8}
$$
$$
y'=-\frac{1}{8y^3}(3-m^2)(m^4+6m^2-4x-3) \pmod{8}
$$
$$
y'^{2}= \frac{1}{64y^6}(3-m^2)^2(1+m^4+2m^2)^2 \pmod{8}.
$$
Now substituting values of $x', x'^2, x'^4, y'$ and $y'^2$ in \eqref{equ1}, we get 
\begin{equation}\label{equ11}
   (1+m^4)^8[4(3-m^2)^2(1+m^4+2m^2)^3+(1+m^4)^4] \equiv 0 \pmod{8}.
\end{equation}
Now using $m \equiv 2 \pmod{64}$ in \eqref{equ11} gives $5 \equiv 0 \pmod{8}$, which is not possible.
%When $m \equiv 2 \pmod{64}$, we get a contradiction by $5 \equiv 0 \pmod{8}.$

\end{proof}
\section{Proof of Theorem \ref{thm1}}
We are now in a position to complete the proof of theorem \ref{thm1}.
\begin{proof}
Before proceeding towards the proof let us recall that $3 \mod p$ is not a square in $({\mathbb{Z}} / {p \mathbb{Z}})^*$ for $p=5,7$ and $17.$\\
The discriminant of $C_m$
$$
\Delta  (C_{m} ) = 16(4m^{6}-3^{3}p^{4}q^{4}).
$$
{(I)}
If $p,q \ne 5$ and $m \not \equiv 0 \pmod 4$ then  $5\not\vert \Delta (C_{m})$ and thus $C_{m}$ has good reduction at $5$. Now two cases may occur while reducing $C_{m}$
to $\mathbb{F}_{5}$.
\begin{itemize}
\item[(a)]  If $p^{2} \equiv 1 \pmod 5$ then
$q$ is $q^{2} \equiv 1,4 \pmod 5$ and  that forces $p^{2}q^{2} \equiv 1,4 \pmod 5$.
\begin{itemize}
\item[(i)] 
When $p^{2}q^{2} \equiv 1 \pmod 5$,
 the curve $C_{m}$ reduces to $y^{2} =x^{3}+1$, $y^{2} =x^{3}-x+1$ and $y^{2}=x^{3}-4x+1$ according as $m^{2} \equiv  0,1 ~\text{or}~ 4 \pmod 5$ respectively. The corresponding size of $C_{m}(\mathbb{F}_{5})$ would be  $6,8$ and $9$.
 \item[(ii)]
When $p^{2}q^{2} \equiv 4 \pmod 5$, depending upon whether $m^{2} \equiv  0,1 ~\text{or}~ 4 \pmod 5$,
 the curve $C_{m}$ reduces to $y^{2} =x^{3}+4$, $y^{2} =x^{3}-x+4$ and $y^{2}=x^{3}-4x+4$  respectively with the corresponding cardinality of $C_{m}(\mathbb{F}_{5})$ being $6,8$ and $9$.
\end{itemize} 
\item[(b)]  If $p^{2} \equiv 4 \pmod 5$. This case is analogous to the previous one.
\end{itemize}
Theorem \ref{th1} and Lagrange's theorem tell us that the possible orders of  $C_{m}(\mathbb{Q})_{tors}$ are $1, 2, 3, 4 ,6,8$ and $9$ only. Lemmas \ref{l2} and \ref{l3} show that $C_{m}(\mathbb{Q})$ does not have points of order $2$ and $3$. Thus in this case
 $$
  C_{m}(\mathbb{Q})_{tors}=  \{ {\mathcal{O}}\}.
 $$
{(II)} If $p=5$ or $q=5$ (as $p \ne q$). Let $p=5$, then 
the defining equation of $C_m$ is 
$$
y^{2}=x^{3}-m^{2}x+25q^{2},
$$
and
$$
\Delta  (C_{m} ) = 16(4m^{6}-3^{3}5^{4}q^{4})
$$
respectively.
\begin{itemize}
\item[(a')] If $q \ne 7$, then utilising the condition $m \not \equiv 0 \pmod 4$ would imply $7 \not\vert  \Delta (C_{m})$, 
and thus $C_{m}$ has a good reduction at $7$. Now three cases may occur while reducing $C_{m}$
to $\mathbb{F}_{7}$.
\begin{itemize}
\item[(i)]  If $q^{2} \equiv 1 \pmod 7$: \\
depending on $m^{2} \equiv  0,1,2 ~\text{or}~ 4 \pmod 7$, the curve $C_{m}$ reduces to $y^{2} =x^{3}+4$, $y^{2} =x^{3}-x+4$, $y^{2} =x^{3}-4x+4$ and $y^{2}=x^{3}-2x+4$ with the corresponding cardinality of $C_{m}(\mathbb{F}_{7})$ would be $3,10,10$ and $10$ respectively.
\item[(ii)] If $q^{2}  \equiv 2 \pmod 7$:\\ then this case $C_{m}$ reduces to $y^{2}=x^{3}+1$,$y^{2}=x^{3}-x+1$, $y^{2}=x^{3}-2x+1$  and $y^2=x^{3}-4x+1$ according as $m^{2}  \equiv 0, 1,2 ~\text{or}~ 4 \pmod 7$ with the corresponding cardinality of $C_{m}(\mathbb{F}_{7})$  would be $12,12,12$ and $12$ respectively.
\item[(iii)]  If $q^{2} \equiv 4 \pmod 7$: \\
depending upon whether $m^{2} \equiv  0,1,2 ~\text{or}~ 4 \pmod 7$, $C_{m}$ reduces to $y^{2} =x^{3}+2$, $y^{2} =x^{3}-x+2$, $y^{2} =x^{3}-2x+2$ and $y^{2}=x^{3}-4x+2$ with the corresponding cardinality of $C_{m}(\mathbb{F}_{7})$ would be $9,9,9$ and $9$ respectively.
\end{itemize}
Thus the possible orders of $C_{m}{(\mathbb{Q})}_{tors}$ are $1, 2, 3, 4,5 ,6,9,10$ and $12$. The results of lemmas \ref{l2}, \ref{l3} and \ref{l5} show that  $C_{m}$ does not have points of order $2,3$ and $5$. Thus in this case too
 $$
  C_{m}(\mathbb{Q})_{tors}=  \{ {\mathcal{O}}\}.
 $$
\item[(b')] If $q=7$. In this case
%When $q= 7$. 
since  $17  \nmid \Delta$, thus  the curve $C_{m}$ has a good reductions at  $17$. Now reducing $C_{m}$ to
$\mathbb{F}_{17}$, the curve
%{\bf I.}  over $\mathbb{F}_{17}$, 
$C_m$ has various possibilities:\\  
$C_{m} :y^{2} \equiv x^{3}+1$,  $y^{2} \equiv x^{3}-x+1$,  $y^{2} \equiv x^{3}-2x+1$,  $y^{2} \equiv x^{3}-4x+1$,  $y^{2} \equiv x^{3}-8x+1$,  $y^{2} \equiv x^{3}-9x+1$,  $y^{2} \equiv x^{3}-13x+1$,  $y^{2} \equiv x^{3}-15x+1$ or  $y^{2} \equiv x^{3}-16x+1$ according as $m^{2} \equiv 0, 1, 2, 4,8,9,13,15 ~{\text{or}}~16 \pmod{17}$ respectively, with the cardinality of $C_{m}(\mathbb{F}_{17})$  would be  $18,14,16,25,21,19,24,24$ or $18$ respectively. 

Hence the possible order of $C_{m}(\mathbb{Q})_{tors}$ are 
$$
1, 2, 3,4, 5,6,7,8,9,12,14,16,18,19,21,24~ \text{or}~25.
$$
(Mazur's theorem \cite{MT}
tells that $14,18,19,21, 24 ~\text{and}~25$ can not be a possible order.)
 Among the rest  only probable value is $1$  because of lemmas \ref{l2} \ref{l3}, \ref{l5} and \ref{l7}. Thus$$
C_{m}(\mathbb{Q})_{tors} = \left\lbrace {\mathcal{O}}\right\rbrace.
$$
\end{itemize}
\end{proof}
\section{The rank of $C_{m}$}
The rank of an elliptic curve is a major topic of research for  many years now but it is  
yet to be understood well.
In this section we show that our family of concern, $C_m$, has at least
two independent rational points, showing the rank is at least $2$. These two points are in fact $A_{m}= (0, pq)$ and $B_{m} = (m, pq)$ 
are in $C_{m} (\mathbb{Q}).$
%(clearly $\in C_{m} (\mathbb{Q}))$.
We need to show that $A_m$ and $B_m$ are linearly independent, i.e.
 %By linear independent rational points  $A_m$ and  $B_m$ do we mean the following:\\
there do not exist non zero integers $a$ and $b$ such that 
$$
[a]A_m + [b]B_m = \mathcal{O},
$$
here $[a]A_m$ denotes $a$-times addition of $A_m$.

Points of order $2$ satisfy $y=0$, while points of order $4$ satisfy $x=0$, so for any rational point $(x,y)$ on $C_m$ such that $xy \ne 0$ must be of infinite order. Therefore in our case rank of $C_{m}$ must be least $1$.
To show that rank is $2$ we need to recall the following result.
\begin{theorem} \label{th2} (\cite{CJ97}). Let $E(\mathbb{Q})$ (respectively $2E(\mathbb{Q})$) be the group of rational points (respectively, double of rational points) on an elliptic curve $E$, and suppose that $E$ has trivial rational torsion. Then the quotient group $\frac{E(\mathbb{Q})}{2E(\mathbb{Q})}$ is an elementary abelian $2$-group of order $2^{r}$, where $r$ is the rank of  $E(\mathbb{Q})$.
\end{theorem}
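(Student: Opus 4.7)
The plan is to deduce this from the Mordell--Weil theorem together with basic facts about the structure of finitely generated abelian groups; no elliptic-curve-specific machinery (like descent via isogenies) is actually required once trivial torsion is assumed. The whole argument reduces to understanding the multiplication-by-two map on $\mathbb{Z}^{r}$.

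First I would invoke the Mordell--Weil theorem, already recalled earlier in the paper, to write
\[
E(\mathbb{Q}) \;\cong\; E(\mathbb{Q})_{\text{tors}} \oplus \mathbb{Z}^{r},
\]
where $r$ is by definition the rank. The hypothesis that $E$ has trivial rational torsion, $E(\mathbb{Q})_{\text{tors}} = \{\mathcal{O}\}$, then gives the clean identification $E(\mathbb{Q}) \cong \mathbb{Z}^{r}$ as abelian groups. This is the step that lets the problem collapse from geometry into pure group theory; without trivial torsion one would have to deal with the $2$-torsion contribution carefully, but here that issue disappears.

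Next I would identify the subgroup $2E(\mathbb{Q})$. Under the isomorphism $E(\mathbb{Q}) \cong \mathbb{Z}^{r}$, the multiplication-by-two endomorphism $[2]\colon E(\mathbb{Q}) \to E(\mathbb{Q})$ corresponds exactly to the endomorphism $v \mapsto 2v$ of $\mathbb{Z}^{r}$, so its image $2E(\mathbb{Q})$ corresponds to $2\mathbb{Z}^{r}$. Therefore
\[
\frac{E(\mathbb{Q})}{2E(\mathbb{Q})} \;\cong\; \frac{\mathbb{Z}^{r}}{2\mathbb{Z}^{r}} \;\cong\; (\mathbb{Z}/2\mathbb{Z})^{r}.
\]
The right-hand side is by inspection an elementary abelian $2$-group (every non-identity element has order $2$) of cardinality exactly $2^{r}$, which is the stated conclusion.

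The only step that deserves care is the claim that $[2]$ is injective on $E(\mathbb{Q})$, which is needed to conclude that $2E(\mathbb{Q})$ really corresponds to the full sublattice $2\mathbb{Z}^{r}$ rather than something larger; but this injectivity is immediate from trivial torsion, since $\ker[2]$ on $E(\mathbb{Q})$ is precisely the $2$-torsion $E(\mathbb{Q})[2] \subseteq E(\mathbb{Q})_{\text{tors}} = \{\mathcal{O}\}$. So there is no genuine obstacle: the theorem is essentially a translation of Mordell--Weil through the functor $(-)/2(-)$, and the whole point of stating it in the paper is to turn the rank inequality $r \ge 2$ into the concrete task of exhibiting two classes in $E(\mathbb{Q})/2E(\mathbb{Q})$ that together with $\mathcal{O}$ generate a subgroup of order at least $4$.
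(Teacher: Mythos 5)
Your proof is correct. The paper itself gives no argument for this statement---it is quoted from Cremona's book---so there is no internal proof to compare against; your reduction via Mordell--Weil (trivial torsion gives $E(\mathbb{Q})\cong\mathbb{Z}^{r}$, hence $E(\mathbb{Q})/2E(\mathbb{Q})\cong\mathbb{Z}^{r}/2\mathbb{Z}^{r}\cong(\mathbb{Z}/2\mathbb{Z})^{r}$) is exactly the standard argument that the citation is standing in for, and it is complete. One small remark: the final paragraph about injectivity of $[2]$ is superfluous, since the image of the map $v\mapsto 2v$ on $\mathbb{Z}^{r}$ is $2\mathbb{Z}^{r}$ by definition of the image, whether or not the map is injective; injectivity (equivalently, triviality of the rational $2$-torsion) matters only if one wants the sharper general statement $E(\mathbb{Q})/2E(\mathbb{Q})\cong(\mathbb{Z}/2\mathbb{Z})^{r}\times E(\mathbb{Q})[2]$, of which the theorem here is the special case with $E(\mathbb{Q})[2]=\{\mathcal{O}\}$.
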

\begin{lemma} \label{le1}
 Let $A = (x', y')$ and $B = (x, y)$ be points in $C_{m}(\mathbb{Q})$ such that $A=2B$ and $x' \in  \mathbb{Z}$. Then
 \begin{itemize}
 \item $x \in  \mathbb{Z},$
\item $x \equiv m \pmod 2.$
\end{itemize}
\end{lemma}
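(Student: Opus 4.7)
The plan is to derive both conclusions by tracking denominators and $2$-adic valuations in the doubling formula \eqref{eqaa}, which expresses $x'$ in terms of the coordinates of $B=(x,y)$.

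For the integrality statement $x\in\mathbb{Z}$, I would begin by writing $x=a/e^2$ and $y=b/e^3$ with $\gcd(a,e)=\gcd(b,e)=1$; this is the standard form for rational points on a Weierstrass cubic with integer coefficients. Substituting into \eqref{eqaa} and clearing denominators yields
\begin{equation*}
x' \;=\; \frac{(a^2+m^2e^4)^2 \;-\; 8p^2q^2\, a\, e^6}{4b^2 e^2}.
\end{equation*}
I would then show that no prime can divide $e$. For an odd prime $\ell\mid e$, the numerator reduces to $a^4\pmod{\ell}$, which is a unit modulo $\ell$, while $\ell\mid 4b^2e^2$; this forces $\ell$ to appear in the denominator of $x'$ in lowest terms, contradicting $x'\in\mathbb{Z}$. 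If $2\mid e$, then $a$ and $b$ are both odd, and a term-by-term 2-adic count gives $v_2$ of the numerator equal to $v_2(a^4)=0$ (the other three terms have $v_2\geq 5,\geq 8,\geq 9$), while $v_2(4b^2e^2)\geq 4$, again a contradiction. Hence $e=\pm1$ and $x\in\mathbb{Z}$.

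For $x\equiv m\pmod 2$, I would assume the parities disagree and derive a contradiction from \eqref{eqaa}, in two sub-cases. If $m$ is odd and $x$ is even, then reducing the curve equation mod $2$ forces $y$ to be odd, so $v_2(4y^2)=2$, whereas the numerator of $x'$ has $v_2=v_2(m^4)=0$, so $x'$ picks up a factor of $4$ in its denominator. If $m$ is even and $x$ is odd, reducing the curve equation mod $4$ forces $x\equiv 3\pmod 4$ and $y$ even, so $v_2(4y^2)\geq 4$; meanwhile the four terms in the numerator of \eqref{eqaa} have $2$-adic valuations $0$, $\geq 4$, $\geq 3$, and $3$ respectively, so the numerator has $v_2=0$. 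In either case $x'$ acquires a nontrivial power of $2$ in its denominator, contradicting $x'\in\mathbb{Z}$.

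The main technical obstacle is the $2$-adic bookkeeping in both steps: one must confirm that the ``leading'' term of the doubling-formula numerator ($a^4$ in the first step, $m^4$ or $x^4$ in the second) is genuinely a $2$-adic unit while every other summand carries strictly more $2$-adic valuation than the denominator can absorb. Once these valuation inequalities are pinned down, both conclusions of the lemma follow by direct contradiction, with no further case analysis required.
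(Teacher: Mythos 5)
Your proof is correct, but it takes a genuinely different route from the paper's. For integrality, the paper eliminates $y$ via the curve equation to obtain the monic quartic relation $x^4 - 4x'x^3 + 2m^2x^2 + (4m^2x'-8p^2q^2)x + (m^4-4p^2q^2x')=0$, whose coefficients are integers because $x'\in\mathbb{Z}$; writing $x=u/s$ in lowest terms and clearing denominators gives $s\mid u^4$, hence $s=1$ --- essentially the rational root theorem, with no external input. You instead invoke the standard fact that affine rational points have coordinates of the shape $(a/e^2, b/e^3)$ with $\gcd(a,e)=\gcd(b,e)=1$, and then compare $\ell$-adic valuations of the numerator and denominator of $x'$ prime by prime; your formula for $x'$ and the valuation counts ($v_2$ of the numerator $=0$ versus $v_2(4b^2e^2)\ge 4$, and the odd-prime case via $N\equiv a^4\pmod{\ell}$) are right, so the argument works, but it imports a lemma the paper's trick avoids (it is in Silverman--Tate, which the paper cites, so the cost is small). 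For $x\equiv m\pmod 2$, the paper has a one-line argument: rewriting \eqref{eqaa} as $(x^2+m^2)^2 = 4y^2x' + 8p^2q^2x$, the right-hand side is divisible by $4$, so $x^2+m^2$ is even (the displayed identity in the paper contains typos --- the $25p^2$ and $50p^2x$ terms --- but this is the intended computation). Your two-case $2$-adic analysis reaches the same conclusion correctly, and in fact does more work than necessary in the second case: with $x$ odd and $m$ even, reduction of the curve equation mod $2$ already forces $y$ even, and $v_2(4y^2)\ge 2 > 0 = v_2(\text{numerator})$ suffices, without pinning down $x\equiv 3\pmod 4$. In short, both proofs are sound; the paper's is shorter and self-contained, while yours is systematic valuation bookkeeping that generalizes mechanically at the price of quoting the denominator-shape lemma.
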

\begin{proof}
Substituting $x= \frac{u}{s}$ with $(u, s)=1$ in \eqref{eqaa} and after elementary simplification
$$
u^{4}-4x'u^{3}s+2m^{2}u^{2}s^{2}+(4m^{2}x'-8p^{2}q^{2})us^{3}+(m^{4}-4p^{2}q^{2}x')s^{4}=0.
$$
This relation implies that  $s \vert u^4$, and therefore $s=1$.
%so $s \in (−1,1)$ 
Thus $x \in \mathbb{Z}$.\\
Again,  \eqref{eqaa} can be written as
$$
(x^{2}+m^{2})^{2}=4[x'x^{3}-m^{2}x+25p^{2}+50p^{2}x],
$$
which implies that $2 \mid (x^{2} + m^{2})$. Thus $x \equiv m \pmod 2$.
\end{proof}
\begin{lemma} \label{la}
The equivalence class $[A_{m}]=[(0,pq)]$ is a non-zero element of ${C_{m}(\mathbb{Q})}/{2C_{m}(\mathbb{Q})}$ for any positive integers $m$ with $m \equiv 2 \pmod {64}$ and for odd primes $p$ and $q$.
\end{lemma}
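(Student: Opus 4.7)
The plan is to argue by contradiction: assume there exists $B=(x,y)\in C_m(\mathbb{Q})$ with $2B=A_m=(0,pq)$, and derive a parity contradiction from the hypothesis $m\equiv 2\pmod{64}$. Since $A_m$ has integer $x$-coordinate $x'=0$, Lemma \ref{le1} immediately applies, giving $x\in\mathbb{Z}$ and $x\equiv m\pmod 2$. Because $m$ is even, $x$ is even.

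Next, I would feed $x'=0$ into the doubling formula \eqref{eqaa}. The numerator must vanish, yielding $x^{4}+2m^{2}x^{2}+m^{4}=8p^{2}q^{2}x$, i.e.
\begin{equation*}
(x^{2}+m^{2})^{2}=8p^{2}q^{2}x.
\end{equation*}
The left side is a perfect square, so $2x$ must be a perfect square. Combined with $x$ even, this forces $x=2w^{2}$ for some nonnegative integer $w$. Substituting back and taking the (positive) square root gives
\begin{equation*}
4w^{4}+m^{2}=4pq\,w.
\end{equation*}

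Since $m\equiv 2\pmod{64}$, write $m=2m_{1}$ with $m_{1}\equiv 1\pmod{32}$, so $m_{1}$ is odd. Dividing by $4$ produces
\begin{equation*}
m_{1}^{2}=w(pq-w^{3}).
\end{equation*}
The left-hand side is odd, so $w$ must be odd; but then $w^{3}$ is odd and, since $p,q$ are odd primes, $pq-w^{3}$ is even. This makes the right-hand side even, contradicting that $m_{1}^{2}$ is odd. Hence no such $B$ exists, so $[A_{m}]\neq 0$ in $C_{m}(\mathbb{Q})/2C_{m}(\mathbb{Q})$.

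The only step that requires any care is the reduction from the doubling equation to the diophantine form $m_{1}^{2}=w(pq-w^{3})$; the congruence $m\equiv 2\pmod{64}$ is used only to guarantee that $m/2$ is odd, so in fact the weaker hypothesis $m\equiv 2\pmod 4$ would suffice here. The stronger congruence is presumably reserved for the companion lemma handling $B_{m}$, where finer mod-powers-of-$2$ information is needed. The main (and essentially only) obstacle is recognizing the correct substitution $x=2w^{2}$; once this is in place, the parity contradiction is immediate.
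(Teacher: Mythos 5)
Your proof is correct, and it follows the paper's skeleton up to the key substitution but finishes by a genuinely simpler route. Like the paper, you assume $A_m=2C$, reduce the vanishing of the doubling numerator to $(x^2+m^2)^2=8p^2q^2x$, and deduce $x=2w^2$ (the paper writes $x=2k^2$). From there the paper keeps the quartic $16k^8+m^4+8k^4m^2-16k^2p^2q^2=0$, first proves separately that $k$ must be odd (via a mod-$4$ sub-argument on $m$), then reduces modulo $64$, divides by $16$, and uses $p^2\equiv q^2\equiv 1\pmod 4$ to reach the contradiction $3\equiv 0\pmod 4$. You instead take the square root of both sides of $(4w^4+m^2)^2=(4pqw)^2$ to get the much tamer relation $4w^4+m^2=4pqw$, hence $m_1^2=w(pq-w^3)$ with $m=2m_1$, and a one-line parity argument (odd left side, even right side whether $w$ is even or odd) finishes the job; no oddness-of-$w$ lemma and no mod-$64$ bookkeeping are needed. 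This also substantiates your closing observation: your argument only uses $m\equiv 2\pmod 4$, so the full strength of $m\equiv 2\pmod{64}$ is not needed for this lemma (indeed the paper's companion Lemma \ref{lb} is stated under $m\equiv 2\pmod 4$, and the paper's own computation here really needs only the same weaker congruence). Two small points you pass over quickly but which are easily justified: $2x$ is a perfect square because $8p^2q^2x=(2pq)^2\cdot 2x$ is a square of an integer, so $2x$ is the square of a rational integer; and $w>0$ (so the positive square root is the right one), since $w=0$ would force $m=0$.
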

\begin{proof}
Assume  $A_{m}=2C$ for some $C=(x,y) \in C_{m}(\mathbb{Q})$. Thus 
$$
\frac{x^4+m^4+2m^{2}x^{2}-8p^{2}q^{2}x}{4y^{2}}=0.
$$
Upon simplification it becomes
\begin{equation} \label{eq1}
x^4+m^4+2m^{2}x^{2}-8p^{2}q^{2}x=0.
\end{equation}
Thus 
\begin{equation}\label{rs22}
(x^2+m^2)^{2}=8 p^{2}q^{2} x.
\end{equation}
The left hand side of \eqref{rs22} is a square  and so the right hand side would also be a square. This implies $x = 2(k)^{2}$ for some $k \in  \mathbb{Z}$,  where $(2,k)=1$.

%\begin{claimproof}
[ Proof of the fact that $(2,k)=1$: Suppose $(2,k) \ne 1$, then $k=2k_{1}$. Now substituting the value of $x=8k_{1}^2$ in \eqref{rs22}, we obtain 
\begin{equation} \label{k2}
  8^4k_{1}^{8} + 16m_{1}^{4} + 128k_{1}^{4}m^{2} = 64k_{1}^{2}p^{2}q^{2}.  
\end{equation}
Equation \eqref{k2} implies $m$ is a multiple of $4$ which is a contradiction to the fact that $m \not \equiv 0 \pmod 4$ ].
%\end{claimproof}

Putting the value of $x$ in equation \eqref{eq1}, we obtain
\begin{equation}\label{rs33}
16k^{8} + m^{4} + 8k^{4}m^{2} - 16k^{2}p^{2}q^{2}=0.
\end{equation}
When $m \equiv 2 \pmod{64}$ this implies $m^{2}  \equiv 4 \pmod {64}$ and $m^{4}  \equiv 16 \pmod{64}$.
We read \eqref{rs33}  modulo $64$ to get
$$
k^{8}+1+2k^{4}-k^{2}p^{2}q^{2} \equiv 0 \pmod 4.
$$
Since $k$ is odd and $p,q$ are odd primes so $p^{2} \equiv 1 \pmod 4$ and $q^{2} \equiv 1 \pmod 4$. Using this we get a contradiction that $3 \equiv 0 \pmod 4$.
Therefore the above equation has no solution under modulo $64$. Hence this equation  has no solution. Therefore $A_{m} \not \in 2C_{m}(\mathbb{Q}).$
\end{proof}
\begin{lemma} \label{lb}
The equivalence class  $[B_{m}]=[(m,pq)]$ is a non-zero element of  ${C_{m}(\mathbb{Q})}/{2C_{m}(\mathbb{Q})}$ for  positive integers $m \equiv 2 \pmod {4}$ and for odd primes $p, q$.
\end{lemma}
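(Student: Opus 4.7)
The plan is to imitate the strategy used for Lemma \ref{la}: assume for contradiction that $B_m = 2C$ for some $C = (x,y) \in C_m(\mathbb{Q})$, translate the doubling equation \eqref{eqaa} into a clean polynomial identity in $x$, and then extract a $2$-adic obstruction from the hypothesis $m \equiv 2 \pmod 4$.

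First, I would apply Lemma \ref{le1} with $(x',y') = B_m = (m, pq)$. Since $x' = m \in \mathbb{Z}$, the lemma gives $x \in \mathbb{Z}$ and $x \equiv m \pmod 2$. Because $m$ is even, $x$ must be even as well.

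Next, setting the $x'$-coordinate in \eqref{eqaa} equal to $m$ and using $y^2 = x^3 - m^2 x + p^2 q^2$ to clear denominators yields
\begin{equation*}
x^4 + m^4 + 2m^2 x^2 - 8 p^2 q^2 x \;=\; 4m\bigl(x^3 - m^2 x + p^2 q^2\bigr).
\end{equation*}
Moving everything to one side and completing the square in the quartic part, one checks the algebraic identity
$$x^4 - 4mx^3 + 2m^2 x^2 + 4m^3 x + m^4 \;=\; (x^2 - 2mx - m^2)^2,$$
so the displayed equation rearranges to
\begin{equation*}
(x^2 - 2mx - m^2)^2 \;=\; 4 p^2 q^2 (2x + m).
\end{equation*}
In particular, $4p^2q^2(2x+m)$ has to be the square of the integer $x^2-2mx-m^2$.

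The decisive step is now a $2$-adic valuation argument. Since $x$ is even and $m \equiv 2 \pmod 4$, we have $2x \equiv 0 \pmod 4$ and hence $2x + m \equiv 2 \pmod 4$, so that the $2$-adic valuation of $2x+m$ equals $1$. Combined with the fact that $p$ and $q$ are odd, this gives $v_2\bigl(4p^2q^2(2x+m)\bigr) = 2 + 0 + 1 = 3$, which is odd. A perfect square must have even $2$-adic valuation, contradiction. Hence no such $C$ exists, and $[B_m]$ is a non-zero element of $C_m(\mathbb{Q})/2C_m(\mathbb{Q})$.

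The main obstacle I foresee is identifying and verifying the factorization $x^4 - 4mx^3 + 2m^2x^2 + 4m^3x + m^4 = (x^2 - 2mx - m^2)^2$; once this shape is recognized, the $2$-adic obstruction is immediate, and it is noteworthy that only $m \equiv 2 \pmod 4$ (rather than the stronger $m \equiv 2 \pmod{64}$ needed for Lemma \ref{la}) is required here.
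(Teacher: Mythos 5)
Your proposal is correct and takes essentially the same approach as the paper: after invoking Lemma \ref{le1}, your identity $(x^{2}-2mx-m^{2})^{2}=4p^{2}q^{2}(2x+m)$ is exactly the paper's relation $(2s^{2}-m^{2})^{2}=p^{2}q^{2}(4s+3m)$ written without the substitution $x-m=2s$, and your odd $2$-adic valuation of the right-hand side (using $x$ even and $m\equiv 2\pmod 4$, so $v_{2}(4p^{2}q^{2}(2x+m))=3$) is the same obstruction as the paper's observation that $4s+3m\equiv 2\pmod 4$ cannot be a perfect square. Your closing remark is also accurate: only $m\equiv 2\pmod 4$ is needed here, just as in the paper's statement of the lemma.
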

\begin{proof}
Assume $B_{m} = (m, pq) = 2C$ for some  $C=(x, y) \in  C_{m}(\mathbb{Q})$. Thus we get
$$
\frac{x^4+m^4+2m^{2}x^{2}-8p^{2}q^{2}x}{4y^{2}}=m.
$$
Since $x \equiv m \pmod 2$ (using lemma \ref{le1}), we can write $x-m=2s$ and 
after simplifying we get
$$
(x-m)^{4}-4m^{2}(x-m)^{2}-8p^{2}q^{2}(x-m)-12mp^{2}q^{2}+4m^{4}=0.
$$
Now using $x-m=2s$, we get
$$
(2s^{2}-m^{2})^{2}=p^{2}q^{2}(4s+3m).
$$
Since left side is a square this implies right would also be square so  $(4s+3m) = w^{2}$ for some $w \in  \mathbb{Z}$.\\
Since $m \equiv  2 \pmod 4$, this implies
$4s +3m \equiv 2 \pmod 4$. So we get into a contradiction by $2 \equiv 0~\text{or}~1 \pmod 4$ .
\end{proof}
\begin{lemma} \label{lab}
The The equivalence class  $[A_{m}+B_{m}]=[(-m,-pq)]$ is a non-zero element of  ${C_{m}(\mathbb{Q})}/{2C_{m}(\mathbb{Q})}$ for  positive integers $m \equiv 2 \pmod {16}$ and  odd primes $p, q$.
\end{lemma}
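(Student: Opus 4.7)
The plan is to imitate the proof of Lemma \ref{lb} almost verbatim, only with the shift and the sign of the target $x$-coordinate changed. The first step is a short addition-law check to identify the point: $A_m$ and $B_m$ share the ordinate $pq$, so the secant through them is the horizontal line $y=pq$, which meets $C_m$ where $x^3-m^2 x=0$, namely at $x=0,m,-m$. The third intersection is $(-m,pq)$, whose negative $(-m,-pq)$ is therefore $A_m+B_m$. So I need to show that $(-m,-pq)$ is not of the form $2C$ with $C=(x,y)\in C_m(\mathbb{Q})$.

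Assuming such $C$ exists, I would first invoke Lemma \ref{le1} to conclude $x\in\mathbb{Z}$ and $x\equiv m\pmod 2$, so that $x+m$ is even. Then I would feed $x'=-m$ into the duplication formula \eqref{eqaa} and clear denominators using $y^2=x^3-m^2x+p^2q^2$, producing the integral identity
\begin{equation*}
x^4+4mx^3+2m^2x^2-4(m^3+2p^2q^2)x+m^4+4mp^2q^2=0.
\end{equation*}
The natural shift here (mirroring the shift $x-m=2s$ used for $B_m$) is $x+m=2s$; substituting $x=2s-m$ and expanding should collapse the quartic into the clean form
\begin{equation*}
(2s^2-m^2)^2=p^2q^2(4s-3m),
\end{equation*}
i.e.\ exactly the analogue of the Pell-like identity obtained in Lemma \ref{lb}, but with $4s+3m$ replaced by $4s-3m$.

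Finally, since the left side is a square and $p^2q^2$ is already a square, $4s-3m$ must itself be a perfect square $w^2$. Reducing modulo $4$ and using the hypothesis $m\equiv 2\pmod{16}$ (in fact $m\equiv 2\pmod 4$ suffices here) gives $4s-3m\equiv -3m\equiv 2\pmod 4$, contradicting $w^2\equiv 0,1\pmod 4$. Hence $(-m,-pq)\notin 2C_m(\mathbb{Q})$, completing the proof.

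The main obstacle is purely algebraic: executing the translation $x+m=2s$ carefully so that the quartic really does fold up into the perfect-square identity $(2s^2-m^2)^2=p^2q^2(4s-3m)$. Picking the wrong shift (e.g.\ $x-m=2s$, as for $B_m$) would leave a messy expression that does not admit the clean square-equals-square descent, and the final modular contradiction would not materialise. Everything else—the use of Lemma \ref{le1} for integrality and parity, and the mod $4$ obstruction—is routine.
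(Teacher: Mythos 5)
Your proof is correct, and it follows the same overall skeleton as the paper (assume $(-m,-pq)=2C$, use Lemma \ref{le1} for integrality and the parity $x\equiv m\pmod 2$, then derive a $2$-adic contradiction), but the key algebraic step is genuinely different. The paper sets $x-m=2s$, which yields the quartic \eqref{rs44}, $4s^{4}+16ms^{3}+20m^{2}s^{2}+8m^{3}s-4p^{2}q^{2}s+m^{4}-p^{2}q^{2}m=0$, and then reads it modulo $16$ using $m\equiv 2\pmod{16}$ and $p^{2}\equiv q^{2}\equiv 1\pmod 8$ to reach the contradiction $2s^{4}-2s-1\equiv 0\pmod 8$. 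You instead set $x+m=2s$; I checked the expansion and your identity $(2s^{2}-m^{2})^{2}=p^{2}q^{2}(4s-3m)$ is indeed what falls out (the $s^{3}$ and $m^{3}s$ terms cancel), after which the descent is the exact analogue of Lemma \ref{lb}: $4s-3m$ must be a perfect square, yet $4s-3m\equiv 2\pmod 4$. Your route buys a cleaner perfect-square identity, a visible symmetry with Lemma \ref{lb}, and a weaker hypothesis ($m\equiv 2\pmod 4$ suffices, versus $m\equiv 2\pmod{16}$ in the statement and the paper's mod-$16$ computation); the paper's route avoids having to recognize the square structure but pays with a heavier congruence calculation. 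One small inaccuracy in your commentary: the claim that the shift $x-m=2s$ ``would not admit'' a modular contradiction is wrong --- that is precisely the paper's argument, via reduction modulo $16$ --- but this aside does not affect the validity of your proof. You also supply the chord computation identifying $A_m+B_m=(-m,-pq)$, which the paper takes for granted; that is a harmless and welcome addition.
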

\begin{proof}
Suppose $A_{m}+B_{m} = (-m, -pq) = 2C$ for some  $C=(x, y) \in  C_{m}(\mathbb{Q})$. Thus 
$$
\frac{x^4+m^4+2m^{2}x^{2}-8p^{2}q^{2}x}{4y^{2}}=-m.
$$
As $x \equiv m \pmod 2$, we can write $x-m=2s$ and 
after simplifying, 
\begin{eqnarray*}
(x-m)^{4}+8m(x-m)^{3}&+&20m^{2}(x-m)^{2}+16m^{3}(x-m)\\
&-&8p^{2}q^{2}(x-m)
+4m^{4}-8p^{2}q^{2}m+4mp^{2}q^{2}\\
&=&0.
\end{eqnarray*}
Now using $x-m=2s$,
\begin{equation}\label{rs44}
4s^{4}+16ms^{3}+20m^{2}s^{2}+8m^{3}s-4p^{2}q^{2}s+m^{4}-p^{2}q^{2}m=0.
\end{equation}
When $m \equiv 2 \pmod{16}$ this implies $m^{2}  \equiv 4 \pmod {16}$ and $m^{4}  \equiv 0 \pmod{16}$.
Now reducing \eqref{rs44} modulo $16$ gives
$$
4s^{4}-4p^{2}q^{2}s-2p^{2}q^{2}  \equiv 0 \pmod {16}.
$$
This in turn gives that $2s^{4}-2p^{2}q^{2}s-p^{2}q^{2}  \equiv 0 \pmod {8}$.
Since $p$ and $q$ are odd primes so $p^{2} \equiv 1 \pmod 8$ and $q^{2} \equiv 1 \pmod 8$ and using this we arrive into a contradiction.
\end{proof}
If we show that ${\left\lbrace [{\mathcal{O}], [A_{m}], [B_{m}],[A_{m}]+[B_{m}]}\right\rbrace}$  is a subgroup of $\frac{C_{m}}{2C_{m}}$
and  $A_{m},B_{m}$ are linearly independent points then the proof that rank of $C_m(\mathbb{Q}) \ge 2$ will be completed.
 \begin{theorem}
 Let  m is a positive integer such that $m \not \equiv 0 \pmod 4$ and 
 $m \equiv 2 \pmod{64}$ with $p, q$ be odd primes then the set 
 $$
 {\left\lbrace [{\mathcal{O}], [A_{m}], [B_{m}],[A_{m}]+[B_{m}]}\right\rbrace}
 $$  is a subgroup of $\frac{C_{m}}{2C_{m}}$ of order $4$
 with $A_{m} = (0,pq), B_{m} = (m, pq)$.
 \end{theorem}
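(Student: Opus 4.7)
The plan is to bootstrap the three preceding lemmas \ref{la}, \ref{lb}, \ref{lab} into the full subgroup statement, using the structural fact that $C_m(\mathbb{Q})/2C_m(\mathbb{Q})$ is an elementary abelian $2$-group. Indeed, Theorem \ref{thm1} gives $C_m(\mathbb{Q})_{tors}=\{\mathcal{O}\}$ under the stated congruence conditions, so Theorem \ref{th2} applies and tells us that every element of the quotient $C_m(\mathbb{Q})/2C_m(\mathbb{Q})$ has order dividing $2$. In particular each class is its own inverse, and $[X]+[Y]=[\mathcal{O}]$ if and only if $[X]=[Y]$.

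Given this, I would first argue closure essentially for free. The set
\[
S := \{[\mathcal{O}],\,[A_m],\,[B_m],\,[A_m]+[B_m]\}
\]
is by definition the subgroup of $C_m(\mathbb{Q})/2C_m(\mathbb{Q})$ generated by $[A_m]$ and $[B_m]$: any $\mathbb{Z}$-linear combination $[a][A_m]+[b][B_m]$ reduces, via the exponent-$2$ relation, to one of the four listed classes. Thus $S$ is automatically closed under addition and taking inverses, so it is indeed a subgroup.

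The only remaining task is to verify that $|S|=4$, i.e.\ that the four listed classes are pairwise distinct. Since the group has exponent $2$, any collision among these four is equivalent to one of the three classes $[A_m]$, $[B_m]$, $[A_m]+[B_m]$ being the identity: for instance $[A_m]=[B_m]$ forces $[A_m]+[B_m]=[\mathcal{O}]$, $[A_m]=[A_m]+[B_m]$ forces $[B_m]=[\mathcal{O}]$, and so on. Lemmas \ref{la}, \ref{lb}, \ref{lab} each rule out exactly one of these three possibilities, using respectively the hypotheses $m\equiv 2\pmod{64}$, $m\equiv 2\pmod{4}$, and $m\equiv 2\pmod{16}$, all of which are implied by the standing assumption $m\equiv 2\pmod{64}$ in the theorem. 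Hence the four classes in $S$ are distinct.

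There is no real obstacle here; the content is genuinely packaged inside the three lemmas proved just above. If any step requires a remark, it is the clean separation between closure (which is essentially formal, given the exponent-$2$ structure) and distinctness (which is precisely where the mod-$64$ hypothesis is consumed via Lemma \ref{la}). Combining closure with the count $|S|=4$ gives that $S$ is a subgroup of $C_m(\mathbb{Q})/2C_m(\mathbb{Q})$ of order $4$, completing the proof; moreover, together with Theorem \ref{th2}, this subgroup of order $2^{2}$ forces the rank of $C_m(\mathbb{Q})$ to be at least $2$, as claimed in the main theorem.
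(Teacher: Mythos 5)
Your proposal is correct and follows essentially the same route as the paper: use Lemmas \ref{la}, \ref{lb}, \ref{lab} to see that $[A_m]$, $[B_m]$, $[A_m]+[B_m]$ are nonzero, and reduce any possible collision among the four classes (via the exponent-$2$ structure of $C_m(\mathbb{Q})/2C_m(\mathbb{Q})$) to one of these three classes being trivial, which the lemmas forbid. The only difference is cosmetic: you spell out the closure step and the exponent-$2$ reduction explicitly, which the paper leaves implicit ("it is easy to show"), and you note that the quotient has exponent $2$ without really needing Theorem \ref{th2} for that fact, since $2[X]=[2X]=[\mathcal{O}]$ holds automatically.
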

\begin{proof}
From the above  we know that $[A_{m}] \ne \left\lbrace {\mathcal{O}}\right\rbrace$, $[B_{m}] \ne  \left\lbrace {\mathcal{O}}\right\rbrace$ and $[A_{m} +  B_{m}] \ne  \left\lbrace {\mathcal{O}}\right\rbrace$.
We now assume $[A_{m}] = [B_{m}]$, then  $[A_{m}+ B_{m}] = [A_{m}] + [B_{m}] = [2A_{m}] = [\left\lbrace {\mathcal{O}}\right\rbrace]$, which is not possible. It is easy to show that 
$[A_{m}]$ and  $ [A_{m} + B_{m}]$ are distinct.  Similarly $[B_{m}]$ and $[A_{m} + B_{m}]$ are also distinct.
Hence $ [{\mathcal{O}}], [A_{m}], [B_{m}]$ and $[A_{m}]+[B_{m}]$ ] are distinct classes in $\frac{C_{m}}{2C_{m}}$.
Thus this set is a subgroup of order $4$ in $\frac{C_{m}}{2C_{m}}$.
\end{proof}
\begin{theorem}
Points $A_{m}$ and $B_{m}$ are linearly independent in $C_{m}:y^2 = x^3 - m^2 x + p^2 q^2$ with $A_{m} = (0,pq), B_{m} = (m, pq)$ and m is a positive integer such that $m \not \equiv 0 \pmod 4$ and 
 $m \equiv 2 \pmod{64}$ with $p, q$ be two odd primes.
 \end{theorem}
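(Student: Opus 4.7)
The plan is to argue by contradiction: suppose there exist integers $a, b$, not both zero, such that $[a]A_m + [b]B_m = \mathcal{O}$. First, using that $C_m(\mathbb{Q})_{tors} = \{\mathcal{O}\}$ by Theorem \ref{thm1}, I would reduce to the case $\gcd(a,b) = 1$. Indeed, if $d = \gcd(a,b) > 1$, dividing yields $d \cdot Q = \mathcal{O}$ for $Q = (a/d)A_m + (b/d)B_m \in C_m(\mathbb{Q})$, and triviality of the torsion forces $Q = \mathcal{O}$, so the relation with $a/d, b/d$ already holds.

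Next, I would pass to the quotient $C_m(\mathbb{Q})/2C_m(\mathbb{Q})$, which by Theorem \ref{th2} is an elementary abelian $2$-group. The relation becomes $\bar{a}[A_m] + \bar{b}[B_m] = [\mathcal{O}]$ with $\bar{a}, \bar{b} \in \mathbb{F}_2$. Since $\gcd(a,b) = 1$, the pair $(\bar{a}, \bar{b})$ lies in $\{(1,0),(0,1),(1,1)\}$. These three cases yield respectively $[A_m] = [\mathcal{O}]$, $[B_m] = [\mathcal{O}]$, or $[A_m + B_m] = [\mathcal{O}]$, each directly contradicting Lemma \ref{la}, Lemma \ref{lb}, or Lemma \ref{lab}. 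Observe that the hypothesis $m \equiv 2 \pmod{64}$ implies both $m \equiv 2 \pmod{16}$ and $m \equiv 2 \pmod{4}$, so all three lemmas apply under the theorem's hypotheses.

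Therefore no such nontrivial relation exists, proving that $A_m$ and $B_m$ are $\mathbb{Z}$-linearly independent in $C_m(\mathbb{Q})$. Combined with the preceding theorem that $\{[\mathcal{O}], [A_m], [B_m], [A_m + B_m]\}$ is a subgroup of order $4$ in $C_m(\mathbb{Q})/2C_m(\mathbb{Q})$, Theorem \ref{th2} then forces $2^r \geq 4$, hence $\mathrm{rank}\,C_m(\mathbb{Q}) \geq 2$, completing the rank portion of Theorem \ref{thm1}.

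The main obstacle has already been surmounted in the three descent lemmas, whose modular computations (especially the reductions modulo $64$ and $16$) rely critically on the congruence $m \equiv 2 \pmod{64}$. The present argument requires only the structural observation that a non-trivial linear relation between $A_m$ and $B_m$, once stripped of any common factor via the torsion-freeness from Theorem \ref{thm1}, must survive reduction modulo $2$ and thereby trigger one of the three descent obstructions already verified.
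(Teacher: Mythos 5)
Your proof is correct and follows essentially the same route as the paper: pass to $C_{m}(\mathbb{Q})/2C_{m}(\mathbb{Q})$ and rule out the three odd-parity patterns of a putative relation via Lemmas \ref{la}, \ref{lb} and \ref{lab}, noting that $m \equiv 2 \pmod{64}$ supplies the hypotheses of all three. The only (minor) difference is your handling of the case where both coefficients are even: you strip the common factor at the outset using the already-proved triviality of the torsion, whereas the paper runs a minimality argument combined with the absence of $2$-torsion (Lemma \ref{l2}); both are valid.
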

\begin{proof}
Assume, on the contrary $a A_{m} + b B_{m} = {\mathcal{O}}$, where $a$ and $b$ are integers with $a$ is minimal.
Four cases needed to be considered.
\begin{itemize}
\item If $a$ is even and $b$ is odd, then $[aA_{m} + bB_{m}] = [{\mathcal{O}}]$  then in the group $\frac{C_{m}}{2C_{m}}$, we get $[B_{m}]=[{\mathcal{O}}]$. Thus we get into contradiction by lemma \ref{lb}.
\item If $a$ is odd and $b$ is even, then $[aA_{m} + bB_{m}] = [{\mathcal{O}}]$  implies that $[A_{m}]=[{\mathcal{O}}]$ which is not possible by lemma \ref{la}.
\item When both $a$ and $b$ are odd, we get $[A_{m} + B_{m}]={\mathcal{O}}$, which contradicts  lemma \ref{lab}.
\item If $a$ and $b$ are both even, then writing $a = 2a',~ b = 2b'$, we get
 $$
 2[a'A_{m} + b'B_{m}]=[{\mathcal{O}}].
 $$
 Which implies that $[a'A_{m} +b'B_{m}]$ is a point of order $2$. From  lemma \ref{l2}, we get $[a' A_{m} +b' B_{m}] =[{\mathcal{O}}]$ and  this contradicts the fact that $a$ is minimal.
\end{itemize}
\end{proof}
Thus we have proved that $A_{m}$ and $B_{m}$ are linearly independent points and $\frac{C_{m}}{2C_{m}}$ contains a subgroup of order $4$.  Now by theorem \ref{th2}, the rank $r$ of $C_{m}(\mathbb{Q})$ is at least $2$ for any positive integer $m$, with $m \not \equiv 0 \pmod 4, m \equiv  2 \pmod {64}$ and $p, q $ be two odd primes.
%\end{document}
 \begin{table}[h!]
 \vspace{-.3cm}
\caption{Rank of $C_{m}(\mathbb{Q})
:y^2 = x^3 - m^2 x + p^2q^2$  for some values of $m,p$ and $q$.} \label{T1} 
\resizebox{\columnwidth}{!}{
 \centering
\begin{tabular}{|c|c|}
\hline
{\textbf{Rank}} & {\textbf{$[m,pq]$}} \\ 
\hline
$ 2$ &  $(2, 21)(2, 33)(194, 33)(2, 39)(194, 51)(2, 57)(194, 57)(258, 35)(2, 55)(2, 65)(66, 65),  $ \\ 
        &  $ (258, 65)(2, 85)(2, 95)(2, 21)(258, 35)(258, 77)(66, 91)(194, 91)(258, 91)(130, 119),  $ \\ 
        &  $(2, 133)(130, 133)(258, 133)(2, 33)(194, 33)(2, 55)(258, 77)(130, 187)(258, 187)(130, 209),  $ \\ 
        &  $(194, 209)(258, 209)(2, 39)(2, 65)(66, 65)(258, 65)(66, 91)(194, 91)(258, 91), $ \\ 
        &  $(258, 221)(2, 247)(66, 247)(194, 247)(194, 51)(2, 85)(130, 119)(130, 187)(258, 187), $ \\ 
        &  $(258, 221)(2, 57)(194, 57)(2, 95)(2, 133)(130, 133)(258, 133)(130, 209)(194, 209)(258, 209) $ \\ 
        &  $(2, 247)(66, 247)(194, 247) $ \\ 
         \hline
          $3$ &  $ (2, 15)(130, 21)(194, 21)(194, 39)(2, 51)(130, 51)(2, 15)(2, 35)(66, 35)(194, 65)(66, 85),$\\ 
        & $ (194, 85)(66, 95)(194, 95)(130, 21)(194, 21)(2, 35)(66, 35)(130, 77)(194, 77)(2, 91), $\\ 
        & $(66, 119)(258, 119)(194, 133)(130, 77)(194, 77)(2, 143)(258, 143)(2, 187)(2, 209)(194, 39), $\\ 
        & $(194, 65)(2, 91)(2, 143)(258, 143)(2, 221)(258, 247)(2, 51)(130, 51)(66, 85)(194, 85), $\\ 
        & $(66, 119)(258, 119)(2, 187)(2, 221)(130, 323)(258, 323)(66, 95)(194, 95)(194, 133)(2, 209), $\\ 
        & $(258, 247)(130, 323)(258, 323) $\\ 
 \hline
 $4$ & $ (194, 15)(130, 33)(130, 57)(194, 15)(194, 35)(194, 55)(258, 55)(258, 85)(194, 35)(2, 77), $\\ 
        & $ (2, 119)(194, 119)(66, 133)(130, 33)(194, 55)(258, 55)(2, 77)(194, 143)(194, 143), $\\ 
        & $(66, 221)(194, 221)(258, 85)(2, 119)(194, 119)(66, 221)(194, 221)(66, 323)(194, 323), $\\ 
        & $ (130, 57)(66, 133)(66, 323)(194, 323)$\\ 
         \hline
$5$ & $(258, 95)(194, 187)(194, 187)(2, 323)(258, 95)(2, 323)$\\ 
      \hline
\end{tabular}
 }
\end{table}
Table \ref{T1} confirms the  results up to certain values of $m,p$ and $q$. The table shows that  the rank of the elliptic curves considered is $\ge 2$.

All the computations are done with the help of SAGE \cite{Ss45}.
\section{Concluding remarks} If we consider a larger family of elliptic curves (say) 
$$
D_{m} : y^2 = x^3 - m^2x + (p q r)^2.
$$
Then following our method the number of cases that have to be dealt with  becomes quite large and to handle that many cases will be cumbersome to say the least. 
%With the help of congruence arguments to handle these cases its very difficult.
Another interesting phenomena that we observed while computing the ranks, that many curves come out with rank $3$. A natural query would be:
%because it is false in a few cases
 Do there exist a subfamily consisting of infinitely many members amongst the members in $C_{m}$ with rank at least $3$? Now proving that a certain set of three points is independent amounts to showing that $7$ distinct points are not doubles of rational points.
 
% Proving that a certain set of four points is independent amounts to showing that 15 distinct points are not doubles of rational points. 
%\section*{acknowledgement}
%The first author would like to thank  Harish-Chandra Research Institute (HRI) and Malaviya National Institute of Technology, Jaipur, for providing congenial atmosphere and sufficient facility to complete this work. 

%\section{Acknowledgement}
%The authors are grateful to Prof. Kalyan Chakraborty for his careful reading, helpful comments and suggestions. 
%The authors are indebted to the anonymous referee for his/her valuable suggestions which has helped improving the presentation of this manuscript.


\begin{thebibliography}{25}

\bibitem {AA05} Antoniewicz, A., On a family of elliptic curves, {\it Universitatis Iagellonicae Acta Mathematica}, {\bf 1285}, (2005), 21--32.

\bibitem {BM02} Brown, E. and Myers, Bruce T, Elliptic curves from Mordell to Diophantus and back, {\it The American mathematical monthly}, {\bf 109(7)}, (2002), 639--649.

\bibitem {CJ97} Cremona, John E, Algorithms for Modular Elliptic Curves, 2nd ed., {\it New York: Cambridge University Press}, (1997).

\bibitem {HD87} Husem{\"o}ller, D., Elliptic Curves, (\it New York: Springer-Verlag), (1987).

\bibitem {AS18}  Juyal, A. and Kumar, Shiv D., On the family of elliptic curves $y^2= x^3-m^2 x+ p^2$, {\it Proceedings-Mathematical Sciences}, {\bf 128(5)}, (2018), 1--11.

\bibitem {MT}  Mazur B, Modular curves and the Eisenstein ideal, {\it Publ. Math. I.H.E.S.}, {\bf 47}, (1977), 33-–186.

\bibitem {JJ92} Silverman, Joseph H. and Tate, John T., Rational points on elliptic curves. {\it Undergraduate Texts in Mathematics, Springer-Verlag, New York}, (1992).

\bibitem {Ss45}  SAGE software, version 4.5.3, \url
{http://www.sagemath.org}.

\bibitem {TP12} Tadi{\'c}, P., The rank of certain subfamilies of the elliptic curve $Y^2= X^3- X+ T^2$, {\it Annales Mathematicae et Informaticae}, {\bf 40}, (2012), 145--153.

\bibitem {TP93} Tadi{\'c}, P., On the family of elliptic curve $Y^2 = X^3 -T^2 X
+1$, {\it Glasnik Matematicki}, {\bf 47(67)},(2012) 81--93.
\end{thebibliography}
\end{document}